\newenvironment{proof}[1][Proof]{\noindent\textbf{#1.} }{\ \rule{0.5em}{0.5em}}
\newtheorem{De}{Definition}[section]
\newtheorem{Th}[De]{Theorem}
\newtheorem{Pro}[De]{Proposition}
\newtheorem{Le}[De]{Lemma}
\newtheorem{Co}[De]{Corollary}
\newtheorem{Rem}[De]{Remark}
\newtheorem{Ex}[De]{Example}
\def\d{\delta}
\def\s{\sigma}
\def\ot{\otimes}
\def\lra{\longrightarrow}
\def\mapright#1{\smash{\mathop{\lra}\limits^{#1}}}
\def\NM{{\mathbf N}}
\newbox\pullbackbox
\begin{document}

\title{Posets, their Incidence Algebras and Relative Operads, and the Cohomology Comparison Theorem}
\author{Batkam Mbatchou V. Jacky III$^{1}$, Fr\'ed\'eric Patras$^{2}$,  Calvin
Tcheka$^{3}.$} \maketitle

\bigskip

\centerline{$^{1}$ Department of Mathematics, Faculty of
    science-University of Dschang} \centerline{Campus Box (237)67
    Dschang, Cameroon} \centerline{ {E-mail address}: batkamjacky3@yahoo.com }
    
\bigskip

\centerline{$^{2}$ Universit\'e C\^ote d'Azur et CNRS} \centerline{UMR 7351 LJAD, Parc Valrose, 06000 Nice, France} \centerline{ {E-mail address}: patras@unice.fr}

\bigskip

\centerline{$^{3}$ Department of Mathematics, Faculty of
science-University of Dschang} \centerline{Campus Box (237)67
Dschang, Cameroon} \centerline{ {E-mail address}:
calvin.tcheka@univ-dschang.org}

\bigskip

\date{}

\bigskip

 \centerline{\bf Abstract} Motivated by various developments in algebraic combinatorics and its applications, we investigate here the fine structure of a fundamental but little known theorem,
the Gerstenhaber and Schack cohomology comparison theorem.
The theorem classically asserts
that there is a cochain equivalence between the usual singular cochain complex of a simplicial complex and the
relative Hochschild complex of its incidence algebra, and a quasi-isomorphism with the standard Hochschild complex. Here, we will be mostly interested in its application to arbitrary posets (or, equivalently, finite topologies) and their incidence algebras.
We construct various
 structures, classical and new, on the above two complexes: cosimplicial, differential graded algebra, operadic and brace algebra structures and show that the comparison
 theorem preserves all of them. These results provide non standard insights on links between the theory of posets, incidence algebras, endomorphism operads and finite and combinatorial topology. By {\it non standard}, we refer here to the use of {\it relative} versions of Hochschild complexes and operads.
\\

\bigskip

{\bf Keywords:}  Poset, finite topology, nerve, simplicial set, brace algebra, operad, Hochschild complex, coalgebra, cohomology.

\bigskip

 {\bf MSC:} 16N60, 18N50, 18G31, 19D23, 18M05, 57T99.
\bigskip

\section{ Introduction}
Various recent results involve or develop connexions between posets, finite topologies, operads, brace algebras, coalgebras, Hopf algebras and algebraic topology.
 Among those, we may mention here as directly relevant for the present work, Foissy's works on the combinatorial and brace algebras structures on operads
 \cite{Foissy}, advances on the algebraic structures on finite topologies, posets and quasi-posets \cite{F5,F4,F3,F1} and various works in the combinatorics
 of free probability, among which \cite{FK}.

The purpose of the present paper is to get new insights on these
connexions using a theorem of Gerstenhaber and Schack (the
cohomology comparison theorem, CCT). The CCT asserts that, for a
given triangulated topological space with associated incidence
algebra A, there exists a quasi-isomorphism of cochain complexes
between a certain relative cohomological Hochschild complex of A and
the singular cochain complex of that topological space \cite{6, 7},
and a quasi-isomorphism with the standard Hochschild complex
\cite{6}, \cite[Th. 1.3]{3}. When extended to arbitrary posets and
finite topologies, the theorem allows to connect two families of
objects of different nature: an operadic-type object of a non
standard type (the relative Hochschild complex of an incidence
algebra) and a classical object in combinatorial topology (the
singular cochain complex associated to the nerve of a poset). Our
results investigate how algebraic structures on these objects
transport under the CCT: differential brace algebras (following an
unpublished paper by one of us \cite{fp}) and various algebraic and
topological structures associated to operads (that are investigated
more generally in recent work by two of us \cite{2}). The
equivalence of categories between finite ($T_0$) topologies and
posets is the prototype for more complex duality phenomena such as
Stone duality. We hope our results will also lead to developments in
that direction.

The article also features the meaningfulness of the notion of relative calculus and operads, introduced here and abstracted from the notion of relative Hochshicld cochains by extending the scope of the usual links between Hochschild complexes and endomorphism operads.

The paper is organized as follows: Section \ref{poset} recollects basic constructions on posets (incidence algebra, nerve, cochain algebra structure). Section \ref{Hochschild} develops various constructions around the incidence algebra of a poset, recalls the notion of relative Hochschild cochains and the CCT. In the process constructions underlying the CCT are introduced at the dual (chain, homological) level and a dual homological comparison theorem is proved. Section \ref{cosimpl} extends the CCT at the cosimplicial level. Section \ref{opera} introduces an operadic structure on relative Hochschild cochains and shows that the CCT holds at operadic level. Section \ref{relop} features the idea that constructions on posets are best understood using relative constructions. This is illustrated by the introduction of relative operads, a notion also meaningful for operator-valued probability. The article concludes by showing that the CCT also holds at the level of brace differential graded algebras.

\begin{Rem} We conclude this introduction with a remark on finite topologies. The article will be mostly written in the language of posets, however its deeper meaning is best understood when having in mind the connexion between the latter and finite topologies. There is indeed a bijection between finite topological spaces $\mathcal T$ satisfying the $T_0$ separation axiom (given two points $x,y$ in $\mathcal T$ there should always exist an open set that contains one point but not the other) and finite posets. Any finite topological space is (finitely) homotopy equivalent to a finite $T_0$ space, so that the separation assumption is not a serious restriction. It follows in particular that all constructions in the article have a direct topological meaning, besides a combinatorial one. For a survey of the theory of finite topological spaces and of the bibliography in the subject, we refer to \cite{F5}.
\end{Rem}

{\bf Acknowledgements}: FP acknowledges
support from the ANR project Algebraic Combinatorics, Renormalization, Free probability and
Operads -- CARPLO (Project-ANR-20-CE40-0007) and from the ANR -- FWF project PAGCAP.

\subsection*{Conventions and notation}
 In the sequel,

 \noindent
$\bullet$ $\mathbb{K}$ denotes an arbitrary field. Vector spaces, tensor
products and linear maps are defined over $\mathbb{K}$ unless
otherwise stated.\\
$\bullet$ The linear span of a set $X$ is denoted ${\mathbb K}(X)$, the cardinality of $X$ is denoted $|X|$.\\
$\bullet$ If $V$ is a graded object,
then  the suspension of V is defined as follows: $(sV )_n = V_{n-1}$.\\
  An element $x\in V_n$ is of degree $n$ and
  we write $|x|= n$.\\
$\bullet$ If $A\stackrel{f}\rightarrow B$ is a vector space map then the dual map is denoted: $B^{\ast}\stackrel{f^{\ast}}\rightarrow A^{\ast}$.\\
$\bullet$ The Kronecker symbol is written $\partial_{x,y}$ (recall it is equal to 1 if $x=y$ and 0 else).\\

\section{Posets, nerves, chains and cochains}\label{poset}

Let $(\Sigma ,\leq)$ be a finite poset (that is, a finite ordered set, where the order is not strict in general). An increasing sequence in $\Sigma$,  $(\sigma_0\le\cdots\le \sigma_n)$, is called a $n$-chain (or simply a chain). It is a strict (or nondegenerate) chain if and only if the sequence is strictly increasing ($\sigma_0<\cdots < \sigma_n$).

Recall that the category $\Delta$ is the category whose objects are the $[n]:=\{0,\dots,n\}$ and whose morphisms are the weakly increasing maps between them. The set of morphisms of $\Delta$ is generated (under the composition product) by the face maps $d_i^n:[n-1]\to[n],\ 0\leq i\leq n$ and the degeneracy maps $s_i^n:[n+1]\to [n], \ i=0,\dots,n$. Here, $d_i^n$ is the unique injective weakly increasing map from $[n-1]$ to $[n]$ such that $i$ is not in its image and $s_i^n$ the unique surjective weakly increasing map that maps $i$ and $i+1$ to $i$. The face and degeneracy maps satisfy identities such as $$s_{j-1}^n\circ s_i^{n+1}=s_i^n\circ s_{j}^{n+1},\ 0\leq i<j\leq n+1$$ that can be used alternatively to define simplicial sets, see
\cite{May} for details.

A simplicial (resp. cosimplicial) set is a contravariant (resp. covariant) functor from $\Delta$ to ${\bf Set}$, the category of sets. The nerve of $\Sigma$, denoted  $\hat{\Sigma}$, is the simplicial set whose n-simplices are ordered
 morphisms (weakly increasing maps) from $[n]$ to $\Sigma$ or equivalently the chains in $\Sigma$,
 $\sigma_0\le\cdots\le \sigma_n$. As some authors use different conventions, notice that, in our terminology, chains correspond to weakly increasing maps from $[n]$ to $\Sigma$ whereas strictly increasing maps correspond to strict chains.

\begin{Rem}
A particular case is the one
where $\Sigma$ is a subset of the set of subsets of
an arbitrary non empty finite set $S$. Then,  $\Sigma$ is a finite
simplicial complex if, for each $\sigma\in \Sigma,$ the
set of non empty subsets of $\sigma$ is a
subset of $\Sigma.$ If $|\sigma| = n + 1,$ $\sigma$ is called a
n-simplex of $\Sigma.$ Elements of $\Sigma$ are ordered by inclusion and
in particular $\Sigma$ endowed with the inclusion order denoted
$\leq,$ can be viewed as a poset. In that case, the simplicial set $\hat{\Sigma}$ is also
named barycentric subdivision of $\Sigma$, its homology and cohomology compute the simplicial homology and cohomology of $\Sigma$.\end{Rem}

The simplical structure on $\hat{\Sigma}:=
\{\hat{\Sigma}_n\}_{n\ge 1}$ is constructed as follows: for $n\ge 1$ and $0\le i\le
n,$
\begin{enumerate}
    \item  The face morphism ${d}^{n}_i$, from $\hat{\Sigma}_n$ to $\hat{\Sigma}_{n-1}$ is defined by
    $$\begin{array}{ccc}
        \hat{\Sigma}_n &\stackrel{{d}^{n}_i}\longrightarrow & \hat{\Sigma}_{n-1} \\
        A & \longmapsto & (\sigma_0\le\sigma_1\le\sigma_2\le\cdots
    \le \sigma_{i-1}\le\sigma_{i+1}\le \cdots\le \cdots\le\sigma_n)
    \end{array}$$
    for $A:=(\sigma_0\le\sigma_1\le\sigma_2\le\cdots
    \le \sigma_{i-1}\le\sigma_i\le\sigma_{i+1}\le \cdots\le \cdots\le\sigma_n)$
or equivalently ${d}^{n}_i(A)=(\sigma_0\le\sigma_1\le\cdots\le\sigma_{i-1}\le\hat{\sigma}_i\le\sigma_{i+1}\le\cdots\le\sigma_n),$
    where the notation $\hat{\sigma}_i$ means that the vertex is  omitted.
    \item
    The degeneracy morphism $s^{n}_i$ duplicates   the simplex $\sigma_i$ at the position i of the sequence:
    $$\begin{array}{ccc}
        \hat{\Sigma}_n &\stackrel{s^{n}_i}\longrightarrow & \hat{\Sigma}_{n+1} \\
        A & \longmapsto & (\sigma_0\le\sigma_1\le\cdots\le\sigma_{i-1}\le\sigma_i\le\sigma_i\le\sigma_{i+1}\le\cdots\le\sigma_n)
    \end{array}$$

\end{enumerate}
    It is an easy and standard exercise to show that these face and degeneracy maps define a simplicial set structure on $\hat\Sigma$ \cite{May}.

 The simplicial vector space $\mathbb K(\hat\Sigma)$ generated by $\hat\Sigma$ is denoted $C_\ast(\hat\Sigma)$ and called the singular chain complex of $\Sigma$. It is obtained by applying the functor $X\to \mathbb K(X)$ to $\hat\Sigma$.

 Recall (see e.g. \cite{May}) that

 \begin{Pro}
  The singular chain complex, $C_*(\hat{\Sigma})=\{C_n(\hat{\Sigma})\}_{n\ge 0},$ is naturally endowed by the Alexander-Whitney map with
a differential graded coalgebra structure.
\end{Pro}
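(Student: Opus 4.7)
The plan is to verify the standard construction: endow $C_\ast(\hat\Sigma)$ with the Moore differential $d=\sum_{i=0}^n(-1)^i d_i^n$, the Alexander--Whitney diagonal, and an augmentation, and check the three groups of axioms (chain map, coassociativity, counit). I would first set the formula for the diagonal on a generator $A=(\sigma_0\le\cdots\le\sigma_n)\in\hat\Sigma_n$:
\[
\Delta_{AW}(A)=\sum_{p+q=n}(\sigma_0\le\cdots\le\sigma_p)\otimes(\sigma_p\le\cdots\le\sigma_n),
\]
together with the augmentation $\varepsilon:C_0(\hat\Sigma)\to\mathbb K$ sending each $0$-chain to $1$ and vanishing in positive degrees. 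Extending linearly gives a well-defined graded map $\Delta_{AW}:C_\ast(\hat\Sigma)\to C_\ast(\hat\Sigma)\otimes C_\ast(\hat\Sigma)$, since all ``front faces'' and ``back faces'' appearing on the right-hand side are again chains in $\Sigma$.

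Next I would prove coassociativity: both $(\Delta_{AW}\otimes\id)\circ\Delta_{AW}$ and $(\id\otimes\Delta_{AW})\circ\Delta_{AW}$ applied to $A$ yield the triple sum
\[
\sum_{p+q+r=n}(\sigma_0\le\cdots\le\sigma_p)\otimes(\sigma_p\le\cdots\le\sigma_{p+q})\otimes(\sigma_{p+q}\le\cdots\le\sigma_n),
\]
which is immediate. The counit axioms reduce to the observation that in the decomposition of $\Delta_{AW}(A)$ only the extremal terms $p=0$ and $p=n$ give contributions after applying $\varepsilon\otimes\id$ or $\id\otimes\varepsilon$, and these reproduce $A$ itself.

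The main technical step, which I expect to be the principal obstacle, is compatibility with the differential, namely the identity
\[
(d\otimes\id+(-1)^{|\cdot|}\id\otimes d)\circ\Delta_{AW}=\Delta_{AW}\circ d.
\]
This is a sign-tracking computation: expanding both sides on $A$ produces sums indexed by pairs $(p,i)$ with $0\le i\le p$ (first factor contribution) and $(p,j)$ with $p\le j\le n$ (second factor contribution, with sign $(-1)^p$ coming from Koszul); the $i=p$ and $j=p$ boundary terms pair off in telescoping fashion and exactly cancel the ``internal'' face maps that are not present on the right-hand side, leaving only the faces $d_i$ with $i<p$ and those with $p<j$, which reassemble into $\Delta_{AW}\circ d$. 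The identity depends only on the simplicial identities among the $d_i^n$ and not on any property of $\Sigma$, so the verification is formal once the signs are organised.

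Naturality with respect to order-preserving maps $\Sigma\to\Sigma'$ is then automatic since $\Delta_{AW}$ is built from simplicial face operators and the construction $\Sigma\mapsto\mathbb K(\hat\Sigma)$ is functorial in simplicial vector spaces. This yields the asserted natural dg-coalgebra structure on $C_\ast(\hat\Sigma)$.
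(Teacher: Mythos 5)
Your proposal is correct and matches the construction the paper uses: your front-face/back-face formula for $\Delta_{AW}$ is exactly the paper's $\sum_{j}\widetilde{d}_{n}^{\,n-j}(u)\otimes\widetilde{d}_0^{\,j}(u)$, and the paper itself offers no proof beyond citing May, so your standard verification (coassociativity, counit, the telescoping cancellation for compatibility with the Moore differential, and naturality) is precisely what is being recalled.
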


The coproduct is given by: $$\begin{array}{ccc}
C_n(\hat{\Sigma})&\stackrel{\Delta}\longrightarrow &\bigoplus\limits_{j=0}^nC_j(\hat{\Sigma})
\otimes C_{n-j}(\hat{\Sigma})\\
u & \longmapsto & \Delta(u)=\sum\limits_{j=0}^{n}\Delta_{j,n-j}(u)=
\sum\limits_{j=0}^{n}\widetilde{{d}}_{n}^{n-j}(u)\otimes\widetilde{{d}}_0^j(u)
\end{array}$$
    where $\widetilde{{d}}^{n-j}_{n}=\underbrace{d^{j+1}_{j+1} d^{j+2}_{j+2}\cdot \cdots  d^{n}_n}_{n-j times}$
    and $\widetilde{d}_0^{j}=\underbrace{d^{n-j+1}_0\cdots
    d^{n}_0}_{j times}$ with $\widetilde{d}_{0}^{0}=Id=\widetilde{d}_n^{0}$\\ and the boundary operator by:
    $\partial_n=\sum\limits_{i=0}^{n}(-1)^{i}d^{n}_i.$
The coaugmentation map is the map that sends all $0$-simplices to 1 and higher dimensional simplices to 0.

Dualising, one gets a cosimplicial structure on the singular cochain complex
$C^*(\hat{\Sigma})=\{C^n(\hat{\Sigma}):=Hom(\hat{\Sigma}_{
n},\mathbb{K})\}_{n\ge 1}$ whose coface and codegeneracy maps are:
   $$\begin{array}{ccc}
        C^{n-1}(\hat{\Sigma})&\stackrel{\mathfrak{F}^{n-1}_i}\longrightarrow & C^{n}(\hat{\Sigma}) \\
        f& \longmapsto & \mathfrak{F}^{n-1}_i(f)=f\circ d^{n}_i.
   \end{array}$$

$$\begin{array}{ccc}
        C^n(\hat{\Sigma})&\stackrel{\mathfrak{D}^{n}_i}\longrightarrow & C^{n-1}(\hat{\Sigma}) \\
        f& \longmapsto & \mathfrak{D}^{n}_i(f)=f\circ s^{n-1}_i.
   \end{array}$$
The Alexander-Whitney coproduct dualizes to the cup product of cochains:
\begin{Pro}
  The singular cochain complex, $C^*(\hat{\Sigma})$ is naturally endowed by the cup product with
a differential graded algebra structure.
\end{Pro}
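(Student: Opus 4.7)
The plan is to dualise the differential graded coalgebra structure on $C_*(\hat\Sigma)$ that was just established. First I would write out the cup product explicitly: for $f\in C^p(\hat\Sigma)$ and $g\in C^q(\hat\Sigma)$, set
$$(f\cup g)(u) := f(\widetilde{d}_{p+q}^{q}(u))\cdot g(\widetilde{d}_0^{p}(u))$$
for $u\in \hat\Sigma_{p+q}$, which is precisely the pairing of $f\otimes g$ with the $(p,q)$-component $\Delta_{p,q}$ of the Alexander--Whitney coproduct. The unit will be the dual of the coaugmentation: the $0$-cochain $1$ sending every $0$-simplex to $1\in\mathbb K$.

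Next I would verify associativity by dualising coassociativity. Coassociativity of $\Delta$ amounts to $(\Delta\otimes\mathsf{Id})\circ\Delta = (\mathsf{Id}\otimes\Delta)\circ\Delta$, which on components reads $(\Delta_{p,q}\otimes\mathsf{Id})\circ\Delta_{p+q,r} = (\mathsf{Id}\otimes\Delta_{q,r})\circ\Delta_{p,q+r}$. Unwinding the definitions of the iterated front and back face maps $\widetilde{d}_{n}^{j}$, $\widetilde{d}_0^{j}$, this reduces to two compositions of elementary face maps $d_i^n$ that coincide by the simplicial identity $d_j^{n+1}\circ d_i^n = d_i^{n+1}\circ d_{j-1}^n$ for $i<j$; both collapse to the inclusion $[k]\hookrightarrow[n]$ sending $i\mapsto i+m$ for the appropriate shift. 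Taking duals then yields $(f\cup g)\cup h = f\cup (g\cup h)$. Unitality follows in the same way from $\widetilde d_0^0=\mathsf{Id}=\widetilde d_n^0$ together with the coaugmentation axiom.

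The main work (but it is standard) is the Leibniz rule
$$\delta(f\cup g) = (\delta f)\cup g + (-1)^{|f|}\, f\cup(\delta g),$$
where $\delta=\sum_i(-1)^i\mathfrak{F}_i$ is dual to $\partial=\sum_i(-1)^i d_i$. The cleanest route is to show that $\partial$ is a coderivation of $\Delta$, i.e.\ $\Delta\circ\partial = (\partial\otimes\mathsf{Id}+\mathsf{Id}\otimes\partial)\circ\Delta$ (with Koszul signs), and then dualise. Concretely, for $u\in\hat\Sigma_{p+q+1}$, I would expand $\delta(f\cup g)(u)=\sum_{i=0}^{p+q+1}(-1)^i (f\cup g)(d_i u)$ and split the sum into three blocks: $0\le i\le p$, where the simplicial identities rearrange the terms into $(\delta f)\cup g$; $p+1\le i\le p+q+1$, where they rearrange (with an overall sign $(-1)^p$) into $f\cup(\delta g)$; and the two junction terms $i=p$ and $i=p+1$, which cancel because $\widetilde d_{p+q}^{q+1}\circ d_{p+1} = \widetilde d_{p+q}^{q}$ and $\widetilde d_0^{p+1}\circ d_p = \widetilde d_0^{p}$ produce identical contributions of opposite sign. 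The bookkeeping of indices is the only delicate point, but it is routine, and concludes the proof that $(C^*(\hat\Sigma),\cup,\delta)$ is a DGA.
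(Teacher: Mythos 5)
Your proposal is correct and follows exactly the route the paper takes (implicitly, citing May): the cup product is the dual of the Alexander--Whitney coproduct, associativity and unitality dualise coassociativity and the coaugmentation, and the Leibniz rule comes from the boundary being a coderivation. Your description of the ``junction terms'' is stated slightly loosely --- the cancellation is really between the extra term $i=p+1$ of $(\delta f)\cup g$ and the extra term $j=0$ of $(-1)^{p}f\cup(\delta g)$, both equal to $\pm f(\widetilde d_{p+q}^{q+1}(u))\,g(\widetilde d_0^{p}(u))$ --- but this is exactly the routine bookkeeping you flag, and the argument is the standard one.
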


Concretely, the cup product is given, for $f\in C^p(\hat\Sigma)$, $g\in C^q(\hat\Sigma)$ and $\sigma\in \Sigma_{p+q}$ by
$$f\cup g(\sigma):=(f\otimes g)\circ \Delta_{p,q}(\sigma).$$
The unit of the cup product is the counit of $\Delta$:
the cochain in $C^0(\hat\Sigma)$ that maps each $0$-simplex to 1.

\section{Incidence algebras, relative Hochschild chains and cochains}\label{Hochschild}
In this section, we recall some classical definitions (incidence
algebras, relative Hochschild cochains) and extend them, featuring
in particular how the idea of {\it relativity} (to a separable
subalgebra) can be developped systematically to better account of
the properties of posets and their nerves. We state and prove an
homology comparison theorem and recall the classical CCT.
\begin{De} The incidence algebra $I_{\Sigma}$ of the poset $\Sigma$ is the associative unital
$\mathbb{K}$-algebra generated by the pairs of simplices $(\sigma,
\sigma')$ with $\sigma\le\sigma'$. The product of two pairs
$(\sigma, \sigma')$ and $(\beta, \beta')$ is $(\sigma, \beta')$ if
$\sigma'= \beta$ and 0 else. The unit of the algebra $1_I$ is the sum $\sum\limits_{\sigma\in\Sigma}(\sigma,\sigma)$.
\end{De}
The incidence algebra
$I_{\Sigma}$ has a (separable) commutative unital subalgebra $S_{\Sigma}$ generated as a
$\mathbb{K}$-algebra by the pairs $(\sigma, \sigma).$

\begin{Rem}\label{relincalg}
The category $\mathbf{BM}$ of $S_{\Sigma}$-bimodules (that is, left
and right modules over $S_\Sigma,$ where the right and left action
can be different) is a (nonsymmetric) tensor category for the tensor
product:
$$M\otimes_{BM} N:=M\otimes_{S_{\Sigma}}N,$$
where $M\otimes_{S_{\Sigma}}N$ is the quotient $S_{\Sigma}$-bimodule
of $M\otimes N$ by the relations $ma\otimes n - m\otimes an$ for
$m\in M, n\in N$ and $a\in S_{\Sigma}$. The same definition will
apply to tensor products of bimodules over an arbitrary ring.

The incidence algebra $I_{\Sigma}$ can then be better understood as an associative unital algebra in the tensor category  $\mathbf{BM}$. Notice in particular that the algebra product from $I_{\Sigma}\otimes I_{\Sigma}$ to $I_{\Sigma}$ factorises through the canonical projection from $I_{\Sigma}\otimes I_{\Sigma}$ to $I_{\Sigma}\otimes_{S_{\Sigma}} I_{\Sigma}$.
\end{Rem}

\begin{De} The barycentric incidence algebra $BI_{\Sigma}$ of the poset $\Sigma$ is the associative unital
$\mathbb{K}$-algebra generated by the $n$-simplices of $\hat\Sigma$, $(\sigma_0,\dots,\sigma_n)$, $n\geq 0$ with the product
$(\sigma_0,\dots,\sigma_n)\cdot (\sigma_{n+1},\dots,\sigma_{n+k})$ equal to $(\sigma_0,\dots,\sigma_n,\sigma_{n+2},\dots,\sigma_{n+k})$ if
$\sigma_n= \sigma_{n+1}$ and 0 else. The unit of the algebra is the sum $\sum\limits_{\sigma\in\Sigma}(\sigma)$.
\end{De}

\begin{Le}
The barycentric incidence algebra identifies with the tensor algebra $T_{S_\Sigma}(I_\Sigma):=\bigoplus\limits_{n\geq 0}I_\Sigma^{\otimes_{S_\Sigma}{n}}$ over $I_\Sigma$ in the category of $S_\Sigma$-bimodules, where we set $I_\Sigma^{\otimes_{S_\Sigma}{0}}:=S_\Sigma$.
\end{Le}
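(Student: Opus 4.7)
The plan is to construct an explicit isomorphism of unital algebras $\phi\colon T_{S_\Sigma}(I_\Sigma)\to BI_\Sigma$ by sending a pure tensor of generators of $I_\Sigma$ to the corresponding chain, and to argue that the $S_\Sigma$-bimodule relations are exactly what is needed to match the barycentric product.

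First I would describe the candidate map on pure tensors:
\[
\phi\bigl((\sigma_0,\sigma_1)\otimes_{S_\Sigma}(\sigma_1',\sigma_2)\otimes_{S_\Sigma}\cdots\otimes_{S_\Sigma}(\sigma_{n-1}',\sigma_n)\bigr) \;=\; (\sigma_0,\sigma_1,\dots,\sigma_n)
\]
whenever $\sigma_i=\sigma_i'$ for all $i$, and extend by $\mathbb K$-linearity; on $I_\Sigma^{\otimes_{S_\Sigma}0}=S_\Sigma$ the map sends the idempotent $(\sigma,\sigma)$ to the $0$-simplex $(\sigma)\in BI_\Sigma$.

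Next I would check that $\phi$ is well defined. The left and right $S_\Sigma$-actions on a generator $(\sigma,\sigma')$ are $(\tau,\tau)\cdot (\sigma,\sigma')=\partial_{\tau,\sigma}(\sigma,\sigma')$ and $(\sigma,\sigma')\cdot(\tau,\tau)=\partial_{\sigma',\tau}(\sigma,\sigma')$; hence the defining relation of the $S_\Sigma$-bimodule tensor product forces
\[
(\sigma_0,\sigma_1)\otimes_{S_\Sigma}(\tau_0,\tau_1)\;=\;\partial_{\sigma_1,\tau_0}\,(\sigma_0,\sigma_1)\otimes_{S_\Sigma}(\tau_0,\tau_1).
\]
Iterating this observation one sees that $I_\Sigma^{\otimes_{S_\Sigma}n}$ has a $\mathbb K$-basis consisting of pure tensors indexed by chains $\sigma_0\le\sigma_1\le\cdots\le\sigma_n$ of $\Sigma$, which is precisely a basis of the degree-$n$ piece of $BI_\Sigma$. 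Thus $\phi$ is a linear bijection in each tensor degree, and I would define the inverse on basis elements by the obvious splitting $(\sigma_0,\dots,\sigma_n)\mapsto (\sigma_0,\sigma_1)\otimes_{S_\Sigma}\cdots\otimes_{S_\Sigma}(\sigma_{n-1},\sigma_n)$.

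Finally, I would check that $\phi$ is an algebra morphism. On the tensor algebra side, the product of two pure tensors representing the chains $(\sigma_0,\dots,\sigma_n)$ and $(\tau_0,\dots,\tau_k)$ is their concatenation in $I_\Sigma^{\otimes_{S_\Sigma}(n+k)}$; by the computation of the basis above, this concatenation vanishes unless $\sigma_n=\tau_0$, in which case it is the tensor corresponding to the chain $(\sigma_0,\dots,\sigma_n,\tau_1,\dots,\tau_k)$. This matches verbatim the product rule defining $BI_\Sigma$, and one further checks that the unit $1_{T_{S_\Sigma}(I_\Sigma)}=\sum_{\sigma}(\sigma,\sigma)\in S_\Sigma$ goes to $\sum_\sigma(\sigma)$, the unit of $BI_\Sigma$. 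The main (and really the only) point of care is the bimodule relation step: one must track carefully that the $S_\Sigma$-bilinear identification replaces the two adjacent endpoints by a single shared vertex, which is exactly how chains are glued in $BI_\Sigma$; once this is clear, the rest is routine bookkeeping.
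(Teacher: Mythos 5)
Your proposal is correct and follows essentially the same route as the paper: identify a basis of $I_\Sigma^{\otimes_{S_\Sigma}n}$ given by composable chains (forced by the orthogonal-idempotent bimodule relations) and match it with the degree-$n$ generators and the product of $BI_\Sigma$. You merely write the isomorphism in the opposite direction and spell out the well-definedness and multiplicativity checks that the paper leaves to ``direct inspection.''
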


\begin{proof} Indeed, direct inspection shows that $I_\Sigma\otimes_{BM}I_\Sigma$ has as a basis the tensor products
 $(\sigma_0,\sigma_1)\otimes_{BM}(\sigma_1,\sigma_2)$ and more generally $I_\Sigma^{\otimes_{S_\Sigma}n}$ has as a
 basis the  tensor products $(\sigma_0,\sigma_1)\otimes_{BM}\dots \otimes_{BM}(\sigma_{n-1},\sigma_n)$. The isomorphism
 between $BI_\Sigma$ and the free associative algebra over $I_\Sigma$ in $\bf{BM}$ is then obtained as:
$$(\sigma_0)\longmapsto (\sigma_0,\sigma_0),$$
and for $n>0$,
$$(\sigma_0,\dots,\sigma_n)\longmapsto (\sigma_0,\sigma_1)\otimes_{BM}\cdots \otimes_{BM}(\sigma_{n-1},\sigma_n).$$

\end{proof}
\begin{Rem}
Notice that the map $$(\sigma_0,\sigma_1)\otimes_{BM}\cdots \otimes_{BM}(\sigma_{n-1},\sigma_n)\longrightarrow (\sigma_0,\sigma_n)$$ is a map of associative unital algebras from $T_{S_\Sigma}(I_\Sigma)$ to $I_\Sigma$.
\end{Rem}
\begin{Rem} Notice also that the arguments above show that there is a canonical retract from $I_\Sigma^{\otimes_{S_\Sigma}{n}}$ to $I_\Sigma^{\otimes n}$ given, in the natural basis of $I_\Sigma^{\otimes_{S_\Sigma}n}$ by:
$$ (\sigma_0,\sigma_1)\otimes_{BM}\cdots \otimes_{BM}(\sigma_{n-1},\sigma_n)\longmapsto  (\sigma_0,\sigma_1)\otimes\cdots \otimes(\sigma_{n-1},\sigma_n).$$
\end{Rem}

\begin{De}
The $S_\Sigma$-relative Hochschild chain complex of $I_\Sigma$ is the $S_\Sigma$ bimodule $T_{S_\Sigma}(I_\Sigma)$ equipped with
\begin{itemize}
\item a simplicial structure by the face maps
$$d_i^n(A)\longmapsto (\sigma_0,\sigma_1)\otimes_{BM}\cdots \otimes_{BM}(\sigma_{i-1},\sigma_{i+1})\otimes_{BM}\cdots\otimes_{BM}(\sigma_{n-1},\sigma_{n}),$$
for $0<i<n$, with
$$d_0^n(A)\longmapsto (\sigma_1,\sigma_2)\otimes_{BM}\cdots \otimes_{BM}(\sigma_{n-1},\sigma_{n}),$$
$$d_n^n(A)\longmapsto (\sigma_0,\sigma_1)\otimes_{BM}\cdots \otimes_{BM}(\sigma_{n-2},\sigma_{n-1}),$$
and the degeneracy maps
$$s_i^n(A)\longmapsto (\sigma_0,\sigma_1)\otimes_{BM}\cdots\otimes_{BM} (\sigma_{i-1},\sigma_i)\otimes_{BM} (\sigma_{i},\sigma_i) \otimes_{BM} (\sigma_{i},\sigma_{i+1})\cdots\otimes_{BM}(\sigma_{n},\sigma_{n+1}),$$
where $A:=(\sigma_0,\sigma_1)\otimes_{BM}\cdots \otimes_{BM}(\sigma_{n},\sigma_{n+1})$ and $i=0,\cdots,n$.
\item It is equipped with a differential coalgebra structure by the coproduct
$$I_\Sigma^{\otimes_{S_\Sigma}{n}}\hookrightarrow I_\Sigma^{\otimes{n}}\to \bigoplus\limits_{i=0}^nI_\Sigma^{\otimes{i}}\otimes I_\Sigma^{\otimes{n-i}}\to
\bigoplus\limits_{i=0}^nI_\Sigma^{\otimes_{S_\Sigma}{i}}\otimes I_\Sigma^{\otimes_{S_\Sigma}{n-i}},$$ where we use the retraction of
 $I_\Sigma^{\otimes_{S_\Sigma}{n+1}}$ into $I_\Sigma^{\otimes{n+1}}$, together with the differential induced by the simplicial structure,
  $\partial_n:=\sum\limits_{i=0}^n(-1)^{i}d_{i}^{n}$.
\end{itemize}
\end{De}

\begin{proof}
The linear isomorphism from $C_\ast(\hat\Sigma)$ to $T_{S_\Sigma}(I_\Sigma)$,
$$(\sigma_0,\cdots,\sigma_n)\longmapsto ((\sigma_0,\sigma_1)\otimes_{BM}\cdots \otimes_{BM}(\sigma_{n-1},\sigma_n))$$
induces (by structure transportation through the isomorphism) a simplicial and differential coalgebra structure on  $T_{S_\Sigma}(I_\Sigma)$. It is an easy exercise to check that it identifies with the structures given in the Definition.
\end{proof}

We get as a corollary an homological version of the CCT (the latter
to be stated below). Strangely enough, this Theorem does not seem to
have been observed and stated in the literature, at our best
knowledge, although allowing to directly connect the homology of
(finite) topological spaces to Hochschild homology.

\begin{Th}[Homology comparison theorem]
Given a finite topological space $\mathcal T$ with associated poset
$\Sigma$, there exists an isomorphism of simplicial vector spaces
$$C_\ast(\hat\Sigma)\cong T_{S_\Sigma}(I_\Sigma)$$
which induces an isomorphism in homology (with arbitrary coefficients)
$$H_\ast(\mathcal T)\cong H_\ast(\hat\Sigma)\cong H_\ast(T_{S_\Sigma}(I_\Sigma)).$$
\end{Th}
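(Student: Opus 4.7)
The plan is to identify the desired simplicial isomorphism with the one already implicitly constructed in the proof of the preceding Definition, and to derive the homology statement from it together with McCord's classical comparison between a finite $T_0$ space and the geometric realization of its nerve.

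First I would define $\phi : C_\ast(\hat\Sigma) \to T_{S_\Sigma}(I_\Sigma)$ by $\mathbb{K}$-linear extension of
$$(\sigma_0, \ldots, \sigma_n) \longmapsto (\sigma_0,\sigma_1) \otimes_{BM} \cdots \otimes_{BM} (\sigma_{n-1}, \sigma_n)$$
for $n>0$, with $(\sigma_0) \mapsto (\sigma_0,\sigma_0)$. That $\phi$ is a bijection on standard bases is exactly the content of the Lemma identifying $BI_\Sigma$ with $T_{S_\Sigma}(I_\Sigma)$: the natural basis of $I_\Sigma^{\otimes_{S_\Sigma} n}$ is indexed by chains $\sigma_0\le\cdots\le\sigma_n$. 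The core verification is that $\phi$ intertwines the simplicial structure maps. For $0<i<n$, the face $d_i^n$ deletes $\sigma_i$, which under $\phi$ corresponds to multiplying the adjacent tensor factors $(\sigma_{i-1},\sigma_i)\cdot(\sigma_i,\sigma_{i+1}) = (\sigma_{i-1},\sigma_{i+1})$ in $I_\Sigma$, matching the face map of the Definition; the extremal cases $i=0$ and $i=n$ drop the leftmost or rightmost factor; and the degeneracy $s_i^n$ duplicates $\sigma_i$, corresponding to insertion of the idempotent $(\sigma_i,\sigma_i)\in S_\Sigma$ in the $i$-th position. Each rule matches termwise the formulas of the Definition of the $S_\Sigma$-relative Hochschild chain complex.

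Passage from a simplicial vector space to its unnormalized chain complex with differential $\sum_i (-1)^i d_i^n$ is functorial, so $\phi$ induces an isomorphism $H_\ast(\hat\Sigma) \cong H_\ast(T_{S_\Sigma}(I_\Sigma))$; tensoring with any coefficient vector space preserves the conclusion, which accounts for the phrase \emph{arbitrary coefficients}. The remaining piece $H_\ast(\mathcal T) \cong H_\ast(\hat\Sigma)$ is McCord's theorem: the geometric realization $|\hat\Sigma|$ of the nerve of the poset associated to a finite $T_0$ space $\mathcal T$ is weakly homotopy equivalent to $\mathcal T$ itself, and an arbitrary finite space is homotopy equivalent to its $T_0$-quotient, so the conclusion extends to the general case. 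I would simply invoke this result via the survey \cite{F5} cited in the introductory Remark.

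The main technical subtlety I expect to navigate is the behaviour of the $S_\Sigma$-relative tensor product: an elementary tensor $(\sigma_{i-1},\sigma_i)\otimes_{BM}(\sigma_j,\sigma_{i+1})$ vanishes in $I_\Sigma\otimes_{S_\Sigma}I_\Sigma$ unless $j=i$, because the idempotent $(\sigma_i,\sigma_i)$ can be moved across $\otimes_{BM}$ and annihilates one of the factors whenever $j\neq i$. This is precisely what makes $\phi$ a bijection rather than merely a surjection, and what keeps the simplicial formulas of the Definition compatible with those on $\hat\Sigma$. Once this is pinned down, the remainder of the argument is a concatenation of formal identifications with the one topological citation.
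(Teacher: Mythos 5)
Your proposal is correct and follows essentially the same route as the paper: the paper's own argument is the transport-of-structure observation attached to the Definition of the relative Hochschild chain complex (the explicit basis bijection $(\sigma_0,\ldots,\sigma_n)\mapsto(\sigma_0,\sigma_1)\otimes_{BM}\cdots\otimes_{BM}(\sigma_{n-1},\sigma_n)$ intertwining faces and degeneracies), from which the homology statement follows, with the identification $H_\ast(\mathcal T)\cong H_\ast(\hat\Sigma)$ left to the McCord-type comparison already invoked in the introductory remark via \cite{F5}. Your discussion of why elementary tensors $(\sigma_{i-1},\sigma_i)\otimes_{BM}(\sigma_j,\sigma_{i+1})$ vanish unless $j=i$ is a correct and welcome elaboration of the "direct inspection" the paper relies on.
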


Let us turn now to the dual, cohomological, side of these questions.
\begin{De}[Relative Hochschild cochain complex]
The n-cochains
of the Hochschild cochain complex of $I_{\Sigma}$ relative to $S_{\Sigma}$ are
the elements of $\mbox{Hom}_{BM}(I_{\Sigma}^{\otimes_{S_{\Sigma}}n}, I_{\Sigma}).$ The relative Hochschild cochain complex is a differential algebra.
The coboundary map (the differential) is obtained (by right composition of morphisms) from the boundary map of the Hochschild chain complex.
The cup product is obtained (also by duality) from the coalgebra structure of the Hochschild chain complex.
\end{De}
Closed formulas will be given below for the differential.
Notice that this is the usual
formula for the Hochschild coboundary, extended to the relative setting.

Direct inspection shows
that $\mbox{Hom}_{BM}(I_{\Sigma}^{\otimes_{S_{\Sigma}}n}, I_{\Sigma}),$ is generated
linearly (over the ground field) by the maps sending a given tensor product
$((\sigma_0, \sigma_1)\otimes_{BM} (\sigma_1, \sigma_2)\otimes_{BM} \cdots\otimes_{BM} (\sigma_{n-1},
\sigma_{n}))$ to $(\sigma_{0}, \sigma_{n})$.

Observe also that the incidence algebra, $I_{\Sigma},$ is  a triangular algebra. The Hochschild cohomology of such algebras can be computed
explicitly by means of a spectral sequence, introduced by
S. Dourlens \cite{3}. We refer from now on to \cite{7} and \cite{3}
for the general properties of the Hochschild cohomology of
triangular and incidence algebras.

A key Theorem, due to Gerstenhaber and Schack \cite{6,7} shows the
key role of the relative Hochschild complex in relating the
cohomology of topological spaces with Hochschild cohomology.
Although little attention seems to have been paid to these results,
they appear to us as a key ingredient of the program of a
noncommutative geometry (although the latter program has been
developed historically in another direction).

\begin{Th}[Gerstenhaber-Schack]\label{comphh} The
relative Hochschild cochain complex, written $C^{*}_{S_{\Sigma}}(I_{\Sigma},
I_{\Sigma}),$ computes $HH^{\ast}(I_{\Sigma}, I_{\Sigma})$, the usual Hochschild cohomology of $I_{\Sigma}$.\end{Th}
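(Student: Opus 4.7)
The plan is to identify $C^*_{S_\Sigma}(I_\Sigma, I_\Sigma)$ with $\mathrm{Ext}^*_{I_\Sigma^e}(I_\Sigma, I_\Sigma)$ by exhibiting an explicit projective resolution of $I_\Sigma$ in the category of $I_\Sigma$-bimodules whose $\Hom$-complex into $I_\Sigma$ is precisely the relative Hochschild cochain complex. Since $HH^*(I_\Sigma, I_\Sigma) = \mathrm{Ext}^*_{I_\Sigma^e}(I_\Sigma, I_\Sigma)$ by definition, this will identify the two cohomology groups.

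The pivotal structural fact I would exploit is that $S_\Sigma \simeq \prod_{\sigma \in \Sigma}\mathbb{K}$ is a separable commutative $\mathbb{K}$-algebra: the symmetric central idempotent $e := \sum_\sigma (\sigma,\sigma) \otimes (\sigma,\sigma) \in S_\Sigma \otimes S_\Sigma$ satisfies $\mu(e) = 1_I$ and provides an $S_\Sigma$-bimodule splitting of the multiplication $\mu : S_\Sigma \otimes S_\Sigma \to S_\Sigma$. In particular every $S_\Sigma$-bimodule $M$ is a direct summand of $S_\Sigma \otimes M \otimes S_\Sigma$, hence projective in $\mathbf{BM}$.

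Next I would introduce the relative bar complex
$$B^{\mathrm{rel}}_n := I_\Sigma \otimes_{S_\Sigma} I_\Sigma^{\otimes_{S_\Sigma} n} \otimes_{S_\Sigma} I_\Sigma,$$
equipped with the usual Hochschild faces and augmentation. The descent from the absolute to the relative bar complex is legitimate because each face only multiplies adjacent factors or inserts the unit, operations that are $S_\Sigma$-bilinear. Acyclicity of the augmented complex $B^{\mathrm{rel}}_\bullet \to I_\Sigma$ follows at once from the extra degeneracy $h(x_0 \otimes_{S_\Sigma} \cdots \otimes_{S_\Sigma} x_{n+1}) = 1_I \otimes_{S_\Sigma} x_0 \otimes_{S_\Sigma} \cdots \otimes_{S_\Sigma} x_{n+1}$, which is well defined on relative tensor products. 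Projectivity of each $B^{\mathrm{rel}}_n$ as an $I_\Sigma$-bimodule then follows from the adjunction
$$\Hom_{I_\Sigma^e}(I_\Sigma \otimes_{S_\Sigma} M \otimes_{S_\Sigma} I_\Sigma, -) \cong \Hom_{BM}(M, -),$$
together with the projectivity of $M = I_\Sigma^{\otimes_{S_\Sigma} n}$ in $\mathbf{BM}$, which is an iterated application of the separability of $S_\Sigma$. This second step is the genuine content of the argument and what I expect to require the most care; everything else is formal.

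Finally, applying $\Hom_{I_\Sigma^e}(-, I_\Sigma)$ to the resolution $B^{\mathrm{rel}}_\bullet$ and invoking the same adjunction identifies the resulting cochain complex, in each degree, with $\Hom_{BM}(I_\Sigma^{\otimes_{S_\Sigma} n}, I_\Sigma) = C^n_{S_\Sigma}(I_\Sigma, I_\Sigma)$; the differential matches the relative Hochschild coboundary by direct inspection (dualise the faces of $B^{\mathrm{rel}}_\bullet$ and compare). One concludes
$$H^*\bigl(C^*_{S_\Sigma}(I_\Sigma, I_\Sigma)\bigr) \cong \mathrm{Ext}^*_{I_\Sigma^e}(I_\Sigma, I_\Sigma) = HH^*(I_\Sigma, I_\Sigma),$$
which is the desired identification.
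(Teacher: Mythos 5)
Your proposal is correct. Note, however, that the paper does not prove this theorem at all: it is quoted from Gerstenhaber and Schack \cite{6,7} and used as a black box (the paper's own contribution lies in the finer cosimplicial, operadic and brace-algebra refinements of the \emph{other} comparison isomorphism $\iota$). What you have written is essentially the standard argument underlying the cited references: $S_\Sigma\cong\prod_{\sigma\in\Sigma}\mathbb{K}$ is separable with separability idempotent $\sum_\sigma(\sigma,\sigma)\otimes(\sigma,\sigma)$, hence $S_\Sigma^e$ is semisimple and every object of $\mathbf{BM}$ is projective; the relative bar complex $I_\Sigma\otimes_{S_\Sigma}I_\Sigma^{\otimes_{S_\Sigma}\bullet}\otimes_{S_\Sigma}I_\Sigma$ is then a projective $I_\Sigma$-bimodule resolution of $I_\Sigma$ by the induction--restriction adjunction along $S_\Sigma^e\to I_\Sigma^e$ plus the extra degeneracy, and applying $\Hom_{I_\Sigma^e}(-,I_\Sigma)$ recovers $C^*_{S_\Sigma}(I_\Sigma,I_\Sigma)$ with the relative Hochschild coboundary. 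All the steps you flag as needing care do go through; the two points worth spelling out in a final write-up are (i) the explicit bimodule splitting $m\mapsto\sum_{\sigma,\tau}(\sigma,\sigma)\otimes(\sigma,\sigma)m(\tau,\tau)\otimes(\tau,\tau)$ showing $M$ is a summand of $S_\Sigma\otimes M\otimes S_\Sigma$, and (ii) the identification $HH^*(I_\Sigma,I_\Sigma)=\mathrm{Ext}^*_{I_\Sigma^e}(I_\Sigma,I_\Sigma)$, which holds because $\mathbb{K}$ is a field so the absolute bar resolution is also projective. A minor mismatch with the paper's conventions: the paper indexes the relative complex from $n\geq 1$, whereas your resolution naturally produces the degree-$0$ term $\Hom_{BM}(S_\Sigma,I_\Sigma)$ (the centralizer of $S_\Sigma$ in $I_\Sigma$); this affects only degree $0$ and is a defect of the paper's indexing, not of your argument.
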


\begin{Th}[Cohomology comparison theorem (CCT)] There is a cochain complex isomorphism $\iota$, that preserves the cup product, between the
singular cochain complex of $\hat{\Sigma}$, $C^{\ast}(\hat{\Sigma})$  and
the relative Hochschild cochain complex of the incidence algebra $I_{\Sigma},$
$$C^{*}_{S_{\Sigma}}(I_{\Sigma}, I_{\Sigma}):=
\{Hom_{BM}(I_{\Sigma}^{\otimes_{S_{\Sigma}} n},I_{\Sigma})\}_{n\ge
1}.$$ The isomorphism is given by:
$$\iota(f)((\sigma_0, \sigma_1)\otimes_{BM} (\sigma_1, \sigma_2)\otimes_{BM}\cdots\otimes_{BM}
(\sigma_{n-1}, \sigma_{n})) := f(\sigma_0\leq \sigma_1 \sigma\leq
\cdots \leq \sigma_n)\cdot (\sigma_0, \sigma_n).$$ for $f \in C^n(\hat{\Sigma}_{n})$. In particular, by Thm \ref{comphh},
$$HH^{\ast}(I_{\Sigma}, I_{\Sigma}) \cong H^{\ast}(\hat{\Sigma},
\mathbb{K}).$$
\end{Th}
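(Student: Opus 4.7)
The plan is to exploit the fact that the chain-level isomorphism $C_\ast(\hat\Sigma)\cong T_{S_\Sigma}(I_\Sigma)$ already established makes both bijectivity and differential-compatibility essentially automatic, so only the cup product needs genuine verification. First I would note that, by the previous Lemma, $I_\Sigma^{\otimes_{S_\Sigma}n}$ has as basis the composable tensors $(\sigma_0,\sigma_1)\otimes_{BM}\cdots\otimes_{BM}(\sigma_{n-1},\sigma_n)$ indexed exactly by the $n$-chains of $\hat\Sigma$. A bimodule morphism to $I_\Sigma$ applied to such a basis element must be fixed by left multiplication by $(\sigma_0,\sigma_0)$ and right multiplication by $(\sigma_n,\sigma_n)$, hence must lie in the line $\mathbb{K}\cdot(\sigma_0,\sigma_n)$. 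Encoding this scalar as the value of a cochain on the corresponding chain immediately exhibits $\iota$ as a linear bijection from $C^n(\hat\Sigma)$ onto $\mathrm{Hom}_{BM}(I_\Sigma^{\otimes_{S_\Sigma}n},I_\Sigma)$, and shows it is the $\mathrm{Hom}_{BM}(-,I_\Sigma)$-dual of the chain-level isomorphism.

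Next I would verify that $\iota$ commutes with the coboundaries. Evaluating the Hochschild coboundary $\delta\iota(f)$ on a basis tensor produces $n+1$ terms: the $n-1$ interior terms multiply two consecutive factors $(\sigma_{i-1},\sigma_i)(\sigma_i,\sigma_{i+1})=(\sigma_{i-1},\sigma_{i+1})$, which on the cochain side correspond to pullback by the interior face maps $d_i$; the two boundary terms use the left and right $I_\Sigma$-actions and collapse via $(\sigma_0,\sigma_1)\cdot(\sigma_1,\sigma_n)=(\sigma_0,\sigma_n)$ and $(\sigma_0,\sigma_{n-1})\cdot(\sigma_{n-1},\sigma_n)=(\sigma_0,\sigma_n)$, matching pullback by $d_0$ and $d_n$. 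Assembled with the alternating signs, this gives $\iota(\delta f)$. The cup product preservation is an equally direct calculation: the relative Hochschild cup product of $\phi\in C^p_{S_\Sigma}$ and $\psi\in C^q_{S_\Sigma}$ concatenates the tensor factors and multiplies the outputs in $I_\Sigma$, so applied to $\iota(f)$ and $\iota(g)$ on a length-$(p+q)$ basis tensor, the outputs $f(\sigma_0\leq\cdots\leq\sigma_p)(\sigma_0,\sigma_p)$ and $g(\sigma_p\leq\cdots\leq\sigma_{p+q})(\sigma_p,\sigma_{p+q})$ multiply in $I_\Sigma$ to $f(\cdots)g(\cdots)(\sigma_0,\sigma_{p+q})$, which is exactly $\iota(f\cup g)$ since the Alexander--Whitney formula splits the chain $\sigma_0\leq\cdots\leq\sigma_{p+q}$ at index $p$.

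The final cohomological statement $HH^\ast(I_\Sigma,I_\Sigma)\cong H^\ast(\hat\Sigma,\mathbb{K})$ then follows from the cochain isomorphism just established combined with Theorem \ref{comphh}, which identifies the relative Hochschild complex with the one computing absolute Hochschild cohomology. The step I expect to need the most care is purely bookkeeping: aligning the two extremal terms of the Hochschild coboundary (where $I_\Sigma$-multiplication rather than tensor contraction is used) with the extremal face maps $d_0$ and $d_n$ on $\hat\Sigma$, and checking that the signs match the convention used for the singular coboundary. There is no conceptual obstacle beyond this, as the statement is essentially the $\mathrm{Hom}_{BM}(-,I_\Sigma)$-dual of the Homology Comparison Theorem packaged with the collapse formula $(\sigma,\sigma')(\sigma',\sigma'')=(\sigma,\sigma'')$ intrinsic to $I_\Sigma$.
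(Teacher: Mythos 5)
Your proposal is correct and follows essentially the same route as the paper: the paper establishes bijectivity by the same ``direct inspection'' that $\Hom_{BM}(I_\Sigma^{\otimes_{S_\Sigma}n},I_\Sigma)$ is spanned by the maps sending a composable basis tensor to $(\sigma_0,\sigma_n)$, and it obtains compatibility with the differential from the cosimplicial comparison theorem of the next section, whose proof is exactly the kind of basis-element computation you describe. The only difference is one of packaging: the paper defers the differential check to the cosimplicial theorem and the cup-product check to the later operadic and brace-algebra sections (and to Gerstenhaber--Schack for the original argument), whereas you verify both directly here; your identification of the two extremal Hochschild coboundary terms with the faces $d_0$ and $d_n$ via the collapse $(\sigma,\sigma')(\sigma',\sigma'')=(\sigma,\sigma'')$ is precisely the point the paper flags when it remarks that its dualized simplicial boundary ``is the usual formula for the Hochschild coboundary, extended to the relative setting.''
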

 \begin{proof} A proof will follow from the finer cosimplicial comparison theorem to be stated in the next section of the article. The original proof of the Theorem (in this form, as there are various variants of the CCT in Gerstenhaber and Schack's works) can be found in \cite[Thm 138, Section 15]{7}. See the same article
   for details,
generalizations, applications (in geometry and deformation theory) and a survey of the history of this theorem and its various variants.
\end{proof}

\section{The cosimplicial comparison theorem}\label{cosimpl}
In this section, we prove that the CCT can be enhanced to the cosimplicial level. For completeness sake we give some details on the proof, as the result also implies the classical CCT.
Recall that the maps $d_i^n,\ s_i^n$ endow $T_{S_\Sigma}(I_\Sigma)$ with a simplicial structure. By duality, this yields a cosimplicial structure
 on the relative Hochschild cochain complex $C^{*}_{S_{\Sigma}}(I_{\Sigma},
I_{\Sigma})$ whose coface and codegeneracy maps are :
    \begin{center}
        $F^{n}_i:Hom_{BM}(I_{\Sigma}^{\otimes_{S_{\Sigma}}
n-1},I_{\Sigma})\longrightarrow
Hom_{BM}(I_{\Sigma}^{\otimes_{S_{\Sigma}}
n},I_{\Sigma})$\\
        $f\longmapsto F^{n}_i(f)= f\circ d^{n}_i$
    \end{center}
    \begin{center}
        $D_i^{n}:Hom_{BM}(I_{\Sigma}^{\otimes_{S_{\Sigma}}
n},I_{\Sigma})\longrightarrow
Hom_{BM}(I_{\Sigma}^{\otimes_{S_{\Sigma}}
n-1},I_{\Sigma})$\\
        $f\longmapsto D_i^{n}(f)=f\circ ^Is^{n}_i.$
    \end{center}
Let us explicitely check one of the cosimplicial identities. For  $0\leq i <j\leq n$ and $f\in Hom_{S_{\Sigma}}(I_{\Sigma}^{\otimes_{S_{\Sigma}}
n},I_{\Sigma})$, we have
    \begin{align*}
        D_i^{n-1} D_j^n(f)&=D_i^{n-1}(f\circ s_j^n)=(f\circ s_j^n)\circ s_i^{n-1};\\
        &=f\circ(s_j^n\circ s_i^{n-1})=f\circ(s_i^{n}\circ s_{j-1}^{n-1});\\
        &=(f\circ s_i^n)\circ s_{j-1}^{n-1}=D_{j-1}^{n-1}(f\circ s_i^n);\\
        &=D_{j-1}^{n-1}D_i^n(f).
    \end{align*}\\

\begin{Th}[Cosimplicial comparison theorem]
 The cochain complexes isomorphism $\iota:C^{\ast}(\hat{\Sigma})\longrightarrow C^{*}_{S_{\Sigma}}(I_{\Sigma}, I_{\Sigma})$
preserves the cosimplicial structures.
\end{Th}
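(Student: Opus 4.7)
The strategy will be to reduce the cosimplicial compatibility of $\iota$ to the simplicial compatibility of the underlying chain-level linear isomorphism. Both cosimplicial structures at issue — $(\mathfrak{F}_i^{n-1}, \mathfrak{D}_i^n)$ on $C^{\ast}(\hat{\Sigma})$ and $(F_i^n, D_i^n)$ on $C^{\ast}_{S_{\Sigma}}(I_{\Sigma}, I_{\Sigma})$ — are defined by precomposition with the corresponding face and degeneracy maps $d_i^n$, $s_i^n$ acting on $C_{\ast}(\hat{\Sigma})$ and on $T_{S_{\Sigma}}(I_{\Sigma})$. So the proof proceeds in two steps: first, exhibit $\iota$ as the linear dual of the chain-level isomorphism $\phi$; then, invoke the simplicial compatibility of $\phi$ to deduce the cosimplicial compatibility of $\iota$.

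The first step rests on the following packaging principle: any $S_\Sigma$-bimodule morphism $g \in \Hom_{BM}(I_\Sigma^{\otimes_{S_\Sigma} n}, I_\Sigma)$ is uniquely determined by a scalar-valued function $\widetilde g : \hat\Sigma_n \to \mathbb{K}$ via
$$g\bigl((\sigma_0,\sigma_1) \otimes_{BM} \cdots \otimes_{BM} (\sigma_{n-1},\sigma_n)\bigr) = \widetilde{g}(\sigma_0 \leq \cdots \leq \sigma_n) \cdot (\sigma_0,\sigma_n),$$
since the bimodule structure forces the image of each basis element into the one-dimensional subspace $(\sigma_0,\sigma_0) \cdot I_{\Sigma} \cdot (\sigma_n,\sigma_n) = \mathbb{K} \cdot (\sigma_0,\sigma_n)$. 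Under this bijection, $\iota$ is precisely the assignment $f \mapsto g$ with $\widetilde g = f$; equivalently, once the tautological endpoint factor $(\sigma_0,\sigma_n)$ is stripped off, $\iota$ coincides with the linear dual of the simplicial isomorphism
$$\phi : C_\ast(\hat\Sigma) \longrightarrow T_{S_\Sigma}(I_\Sigma),\quad (\sigma_0 \leq \cdots \leq \sigma_n) \longmapsto (\sigma_0,\sigma_1) \otimes_{BM} \cdots \otimes_{BM} (\sigma_{n-1},\sigma_n)$$
that was already established in the proof of the definition of the $S_\Sigma$-relative Hochschild chain complex.

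The second step is then essentially formal: for each coface and codegeneracy, I would evaluate both sides of $\iota \circ \mathfrak{F}_i^{n-1} = F_i^n \circ \iota$ and $\iota \circ \mathfrak{D}_i^n = D_i^{n-1} \circ \iota$ on a generic basis element $(\sigma_0,\sigma_1) \otimes_{BM} \cdots \otimes_{BM} (\sigma_{n-1},\sigma_n)$ and check that the same scalar and the same endpoint pair appear on both sides. For an inner coface $0 < i < n$, both sides yield $f(\sigma_0 \leq \cdots \leq \widehat{\sigma_i} \leq \cdots \leq \sigma_n) \cdot (\sigma_0,\sigma_n)$, matching directly via the definition of $d_i^n$ (merging the consecutive factors $(\sigma_{i-1},\sigma_i)$ and $(\sigma_i,\sigma_{i+1})$ on the tensor side, omitting $\sigma_i$ on the chain side). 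The codegeneracy check is the same, with $s_i^n$ duplicating $\sigma_i$ on chains while inserting the idempotent $(\sigma_i,\sigma_i)$ on tensors without altering either the scalar or the endpoint pair. The main technical point, and the step requiring the most care, is the treatment of the outer face maps $d_0^n$ and $d_n^n$: the output endpoint pair becomes $(\sigma_1,\sigma_n)$ or $(\sigma_0,\sigma_{n-1})$ rather than $(\sigma_0,\sigma_n)$, and one must check that this shift is reflected consistently on both sides of the identity. This consistency is precisely the bookkeeping that was built into the identification of $\phi$ as a simplicial isomorphism, so once $\phi$ is properly established the outer cases require no additional work.
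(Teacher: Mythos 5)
Your overall strategy is the same as the paper's: the paper likewise proves the theorem by evaluating both sides of the coface/codegeneracy identities on a generic basis element $(\sigma_0,\sigma_1)\otimes_{BM}\cdots\otimes_{BM}(\sigma_{n-1},\sigma_n)$ and matching the scalar and the endpoint pair, carrying out the codegeneracy case explicitly; your ``packaging principle'' is exactly the paper's earlier observation that $\Hom_{BM}(I_\Sigma^{\otimes_{S_\Sigma}n},I_\Sigma)$ is spanned by the maps sending such a basis element to a scalar multiple of $(\sigma_0,\sigma_n)$. For the codegeneracies and the inner cofaces your reduction to the simplicial compatibility of $\phi$ is complete and correct, because there $s_i^n$ and $d_i^n$ ($0<i<n$) preserve the extremal indices $\sigma_0,\sigma_n$ and hence are genuine $S_\Sigma$-bimodule maps.

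The gap sits exactly where you locate the difficulty, and your resolution of it is wrong as stated. For the outer cofaces the shift of the endpoint pair is \emph{not} absorbed by the bookkeeping of $\phi$: the map $d_0^n$ that deletes the first tensor factor changes the left $S_\Sigma$-action from $(\sigma_0,\sigma_0)$ to $(\sigma_1,\sigma_1)$, so it is not an $S_\Sigma$-bimodule morphism; consequently $g\circ d_0^n$ does not lie in $\Hom_{BM}(I_\Sigma^{\otimes_{S_\Sigma}n},I_\Sigma)$ at all, and its value $\widetilde g(\sigma_1\leq\cdots\leq\sigma_n)\cdot(\sigma_1,\sigma_n)$ differs from the required $f(\sigma_1\leq\cdots\leq\sigma_n)\cdot(\sigma_0,\sigma_n)$. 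The identity $\iota\circ\mathfrak{F}_0^{n-1}=F_0^n\circ\iota$ only holds once $F_0^n$ is taken to be the Hochschild-style outer coface $F_0^n(g)(a_1\otimes\cdots\otimes a_n)=a_1\cdot g(a_2\otimes\cdots\otimes a_n)$ (and dually $F_n^n(g)(a_1\otimes\cdots\otimes a_n)=g(a_1\otimes\cdots\otimes a_{n-1})\cdot a_n$): it is the left multiplication by $(\sigma_0,\sigma_1)$ that converts $(\sigma_1,\sigma_n)$ back into $(\sigma_0,\sigma_n)$ and restores $S_\Sigma$-equivariance. This step uses the algebra structure of $I_\Sigma$, not merely the fact that $\phi$ is a simplicial isomorphism, so the outer cases do require a separate (short) computation. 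To be fair, the paper's own proof also writes out only the codegeneracy case, so your coverage is no worse than the paper's; but the explicit claim that the outer faces come for free from $\phi$ is the one assertion in the proposal that must be replaced by the argument above.
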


\begin{proof} Let us explicitely check for example that the map $\iota$ is compatible with the degeneracy operators, that is
    that the diagrams below are commutative\\
    \xymatrix{ C^n(\hat{\Sigma})\ar[dd]_{\iota_n} \ar[r]^{\mathfrak{D}^{n}_i}& C^{n-1}(\hat{\Sigma}) \ar[dd]^{\iota_{n-1}}&\\
    && i.e., D^{n}_i\circ \iota_n= \iota_{n-1}\circ \mathfrak{D}^{n}_i \qquad (I)\\
        C^{n}_{S_{\Sigma}}(I_{\Sigma},I_{\Sigma}) \ar[r]^{D^{n}_i}& C_{S_{\Sigma}}^{n-1}(I_{\Sigma},I_{\Sigma})
        &}\\
        Indeed for any $f\in C^n(\hat{\Sigma})$, one has:
        \begin{align*}
        (D_i^{n}\circ \iota_n(f))((\sigma_0,\sigma_1)\otimes_{S_{\Sigma}}\cdots\otimes_{S_{\Sigma}}(\sigma_{n-2},\sigma_{n-1}))
        &=D_i^{n}(\iota_n(f))((\sigma_0,\sigma_1)\otimes_{S_{\Sigma}}\cdots\otimes_{S_{\Sigma}}(\sigma_{n-2},\sigma_{n-1}))\\
        &=(\iota_n(f)\circ d_i^{n})((\sigma_0,\sigma_1)\otimes_{S_{\Sigma}}\cdots\otimes_{S_{\Sigma}}(\sigma_{n-2},\sigma_{n-1}))\\
        &=\iota_n(f)((\sigma_0,\sigma_1)\otimes_{S_{\Sigma}}\cdots\otimes_{S_{\Sigma}}(\sigma_{i-1},\sigma_i)\otimes_{S_{\Sigma}}(\sigma_i,\sigma_i)\otimes_{S_{\Sigma}} \\
        &(\sigma_i,\sigma_{i+1})\otimes_{S_{\Sigma}}\cdots\otimes_{S_{\Sigma}}(\sigma_{n-2},\sigma_{n-1}))\\
        &=f(\sigma_0\le\sigma_1\le\cdots\le\sigma_{i-1}\le\sigma_i\le\sigma_i\le\sigma_{i+1}\le\cdots\le\sigma_{n-1}) \\
        &(\sigma_0,\sigma_{n-1})\\
        &=(f\circ {d}^{n}_i)(\sigma_0\le\sigma_1\le\cdots\le\sigma_{i-1}\le\sigma_i\le\sigma_{i+1}\le\cdots\le\sigma_{n-1}) \\
        &(\sigma_0,\sigma_{n-1})\\
        &=\iota_{n-1}(f\circ {d}^{n}_i)((\sigma_0,\sigma_1)\otimes_{S_{\Sigma}}\cdots\otimes_{S_{\Sigma}}(\sigma_{n-2},\sigma_{n-1}))\\
        &=(\iota_{n-1}\circ \mathfrak{D}^{n}_i)(f)((\sigma_0,\sigma_1)\otimes_{S_{\Sigma}}\cdots\otimes_{S_{\Sigma}}(\sigma_{n-2},\sigma_{n-1})).
    \end{align*}
\end{proof}

\section{Operadic comparison theorem}\label{opera}

\begin{De}\label{nsop}
   A nonsymmetric operad (or operad for short, in this article) over the category of
    $\mathbb{K}$-vector spaces is a collection of vector spaces $\{\mathcal{O}(k)\mid k\ge 1\}$ together with
    composition products:
    $$\begin{array}{ccc}\mathcal{O}(k)\otimes\mathcal{O}(n_1)\otimes\cdots\otimes\mathcal{O}(n_k)&\stackrel
    {\gamma^{\mathcal{O}}}\longrightarrow & \mathcal{O}(n_1+\cdots+n_k)\\
        x\otimes x_1\otimes\cdots\otimes x_k & \longmapsto & \gamma^{\mathcal{O}}(x;x_1,\cdots,x_k)
    \end{array}$$
    which are:
    \begin{enumerate}
\item associative in the sense that
        \begin{align*}
    \gamma^{\mathcal{O}}(\gamma^{\mathcal{O}}(x;x_1,\cdots,x_k);y_1,\cdots,y_{n_1+\cdots+n_k})&=\gamma^{\mathcal{O}}(x;\gamma^{\mathcal{O}}(x_1;y_1,\cdots,y_{n_1}),\gamma^{\mathcal{O}}(x_2;y_{n_1+1},\cdots,y_{n_1+n_2})\\
    &\cdots,\gamma^{\mathcal{O}}(x_k;y_{n_1+\cdots+n_{k-1}+1},\cdots,y_{n_1+\cdots+n_{k-1}+n_k})),
        \end{align*}
 \item there is an identity element $1_{\mathcal{O}}\in\mathcal{O}(1)$, also called simply the unit of the operad, such that
       $$\gamma^{\mathcal{O}}(x;\underbrace{1_{\mathcal{O}},\cdots,1_{\mathcal{O}}}_{\mbox{k times}})=x =\gamma^{\mathcal{O}}(1_{\mathcal{O}};x).$$
    \end{enumerate}
\end{De}

\begin{De}
{\it Let $\mathcal{O}$ and $\mathcal{O'}$ be two $\Sigma$-operads
with respective composition products $\gamma^{\mathcal{O}}$ and
$\gamma^{\mathcal{O}^{\prime}}$ and respective associated units
$1_{\mathcal{O}} \hspace{2mm}\mbox{and}\hspace{2mm}
1_{\mathcal{O'}}.$ A morphism of operads
$\mathcal{O}\stackrel{f}\longrightarrow\mathcal{O'}$ is a collection
$\{f_n:\mathcal{O}(n)\longrightarrow\mathcal{O'}(n)\}_{n\geq 0}$ of
vector space morphisms such that:
\begin{enumerate}
\item $f(1_{\mathcal{O}})= 1_{\mathcal{O'}}$;
\item $ f_j(\gamma^{\mathcal{O}}(x_0\otimes x_1 \otimes...\otimes x_n))= \gamma^{\mathcal{O}^{\prime}}(f_n(x_0)\otimes f_{i_1}(x_1) \otimes...\otimes f_{i_n}(x_n))$ with $j=i_1 +i_2 +...+i_n.$
\end{enumerate}}
\end{De}

\begin{Rem}
\begin{enumerate}
\item Equivalently an operad can also be defined by the so called partial
compositions:
$$\begin{array}{ccc}
 \mathcal{O}(m) \otimes \mathcal{O}(n)& \stackrel{\circ_{i}}\longrightarrow & \mathcal{O}(m + n - 1)\qquad , 1\leq i\leq m\\
x \otimes y  & \longmapsto & x \circ_{i} y\\
\end{array}$$
satisfying some properties inherited from Definition \ref{nsop} (see e.g. \cite{dotsenko,lv} for explicit axioms).\\
    The two definitions are related as follows:
\begin{enumerate}
\item[(1-i)] $x \circ_{i} y = \gamma^{\mathcal{O}}(x; \overbrace{id, \cdots,\underbrace{y}_{i}, \cdots  id}^{m-tuple} );  \qquad  1\leq i\leq m.$
\item[(1-ii)] $\gamma^{\mathcal{O}}(x;  y_{1},\cdots , y_{m})= (\cdots(((x \circ_{m} y_{m}) \circ_{m-1} y_{m-1})\cdots ) \circ_{1} y_{1}$
\end{enumerate}
\item An operad, $\mathcal{O},$ is said to be \textbf{multiplicative} if there exists $m\in\mathcal{O}(2)$
such that $m\circ_1m=m\circ_2m$.
\item An operad $\mathcal{O}$ is said to be \textbf{unitary} if the unit morphism $\eta:\mathbb{K}\longrightarrow\mathcal{O}(1)$
is an isomorphism in which case the unit element of $\mathcal{O}$
will be denoted: $1_{1}= 1_{\mathcal{O}}= \eta(1_{\mathbb{K}}) \in
\mathcal{O}(1).$
\end{enumerate}
\end{Rem}
\begin{Ex}
The fundamental example of an operad is the operad $\mathcal{L}_V$
of multilinear endomorphisms of a vector space $V$, called
endomorphism operad and defined by: for all $n\ge 1$,
$\mathcal{L}_V(n)=Hom_{\mathbb{K}}(V^{\otimes n},V)$. The
composition product is obtained from the composition of multilinear
maps. The associated unit is the identity map:
$V\stackrel{id_V}\longrightarrow V$.\end{Ex}
\begin{Ex}
The fundamental example of a multiplicative operad is the operad
$\mathcal{L}_A$ of multilinear endomorphisms of an associative algebra $A$: for all
$n\ge 1$, we have again $\mathcal{L}_A(n)=Hom_{\mathbb{K}}(A^{\otimes n},A)$. The
associated unit is the identity map, the multiplication $m$ is the algebra product.

Multiplication is essential for our forthcoming developments. For example, the existence of the product $m$ is what allows the definition of a differential on the Hochschild cochain complex of an associative algebra, or the definition of the cup product.\end{Ex}

\begin{Th}
Let $S_\Sigma -Hom(I_\Sigma^{\otimes n},I_\Sigma)$ to be the $S_\Sigma$-bimodule of $S_\Sigma$-bimodule morphisms from $I_\Sigma^{\otimes n}$ to $I_\Sigma$ that factorize through $I_\Sigma^{\otimes_{S_\Sigma}n}$. We set:
$$End_{I_\Sigma,S_\Sigma}(n):=S_\Sigma -Hom(I_\Sigma^{\otimes n},I_\Sigma).$$
The family $(End_{I_\Sigma,S_\Sigma}(n))_{n\geq 1}$ is a suboperad of the endomorphism operad $\mathcal L_{I_\Sigma}$.
\end{Th}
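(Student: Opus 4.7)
The plan is to verify the two axioms of a suboperad: that the operadic unit lives in $End_{I_\Sigma, S_\Sigma}$ and that the family is stable under the composition product $\gamma^{\mathcal L_{I_\Sigma}}$. Throughout, I will use the elementary observation that a linear map $h\colon I_\Sigma^{\otimes n}\to I_\Sigma$ factors through $I_\Sigma^{\otimes_{S_\Sigma}n}$ if and only if it kills every element of the form
$$a_1\otimes\cdots\otimes a_j s\otimes a_{j+1}\otimes\cdots\otimes a_n \;-\; a_1\otimes\cdots\otimes a_j\otimes s a_{j+1}\otimes\cdots\otimes a_n,$$
for $s\in S_\Sigma$, $1\leq j\leq n-1$, and I will use repeatedly that $S_\Sigma$ is commutative and sits inside $I_\Sigma$.

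For the unit, $1_{\mathcal L_{I_\Sigma}}=\mathrm{id}_{I_\Sigma}\colon I_\Sigma\to I_\Sigma$ is trivially an $S_\Sigma$-bimodule map, and factorization through $I_\Sigma^{\otimes_{S_\Sigma}1}=I_\Sigma$ is automatic; hence $\mathrm{id}_{I_\Sigma}\in End_{I_\Sigma, S_\Sigma}(1)$.

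For stability under composition, fix $f\in End_{I_\Sigma, S_\Sigma}(m)$ and $g_i\in End_{I_\Sigma, S_\Sigma}(n_i)$ for $i=1,\dots,m$, and let $F:=\gamma^{\mathcal L_{I_\Sigma}}(f; g_1,\dots,g_m)=f\circ(g_1\otimes\cdots\otimes g_m)$, acting on $I_\Sigma^{\otimes N}$ with $N=n_1+\cdots+n_m$. First, $F$ is an $S_\Sigma$-bimodule map because it is a composition of $S_\Sigma$-bimodule maps: the outer left $S_\Sigma$-action passes through $g_1$ (left $S_\Sigma$-linear in its first argument) and then through $f$, and symmetrically for the right action through $g_m$ and $f$. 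Second, to check that $F$ factors through $I_\Sigma^{\otimes_{S_\Sigma}N}$, we verify it kills the relations displayed above. Two cases arise according to the position $j$ of the inserted scalar $s\in S_\Sigma$.

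The easy case is when $j$ and $j+1$ fall into the same block of inputs of some $g_i$; then the identity holds after applying $g_i$ already, because $g_i$ factors through $I_\Sigma^{\otimes_{S_\Sigma}n_i}$, and $F$ respects it a fortiori. The main case, which is the actual obstacle, is when $j$ is the last slot of $g_i$ and $j+1$ is the first slot of $g_{i+1}$: here the $S_\Sigma$-balancing has to be transferred across the outer product. The resolution combines three ingredients: using that $g_i$ is right $S_\Sigma$-linear in its last argument, we rewrite $g_i(\ldots\otimes a_js)=g_i(\ldots\otimes a_j)\cdot s$; using that $f$ itself factors through $I_\Sigma^{\otimes_{S_\Sigma}m}$, we move this $s$ across the $i$-th tensor slot of $f$, from the right of $g_i(\ldots)$ to the left of $g_{i+1}(\ldots)$; and finally, using that $g_{i+1}$ is left $S_\Sigma$-linear in its first argument, we rewrite $s\cdot g_{i+1}(a_{j+1}\otimes\ldots)=g_{i+1}(sa_{j+1}\otimes\ldots)$. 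Assembling these three steps gives exactly the desired equality, and completes the proof that $F\in End_{I_\Sigma, S_\Sigma}(N)$. The associativity and unitality of $\gamma$ on $End_{I_\Sigma,S_\Sigma}$ are then inherited directly from the ambient operad $\mathcal L_{I_\Sigma}$.
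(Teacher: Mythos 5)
Your proof is correct, but it takes a genuinely different route from the paper's. The paper argues by explicit computation on a spanning set: it exhibits the basis maps $\mu_{(\sigma_0,\dots,\sigma_n)}$ of $End_{I_\Sigma,S_\Sigma}(n)$ (each sending $(\sigma_0,\sigma_1)\otimes\cdots\otimes(\sigma_{n-1},\sigma_n)$ to $(\sigma_0,\sigma_n)$ and killing everything else), observes that the identity of $I_\Sigma$ is $\sum_{\sigma_0\leq\sigma_1}\mu_{(\sigma_0,\sigma_1)}$, and computes the partial composition $\mu_{(\sigma_0,\dots,\sigma_n)}\circ_i\mu_{(\beta_0,\dots,\beta_k)}$ in closed form, checking it lands back in the span. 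You instead argue structurally: you characterize factorization through $I_\Sigma^{\otimes_{S_\Sigma}n}$ by the vanishing on balancing relations, and for the composite $f\circ(g_1\otimes\cdots\otimes g_m)$ you split the relations into the within-block case (absorbed by $g_i$) and the across-block case, which you resolve by the three-step transfer (right $S_\Sigma$-linearity of $g_i$ in its output, balancing of $f$ across its $i$-th slot, left $S_\Sigma$-linearity of $g_{i+1}$); that chain of equalities is exactly right, and your handling of the unit and of the bimodule property of the composite is also fine. (Incidentally, you never actually use commutativity of $S_\Sigma$, so the hypothesis you flag is not needed.) What each approach buys: yours is more general and essentially proves that for any subalgebra $B$ of an algebra $A$ the relative morphisms form a suboperad, which is precisely the relative endomorphism operad of Section \ref{relop}; the paper's computation is less conceptual but produces the explicit composition formula (\ref{eqqun}), which is then the key input to the proof of the operadic comparison theorem, so your argument, while complete for the stated theorem, would leave that later formula still to be derived.
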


In the Theorem, the $S_\Sigma$-bimodule structure of $I_\Sigma^{\otimes n}$ is given by:
$$(\beta,\beta)\cdot ((\sigma_0,\sigma_1)\otimes\dots\otimes(\sigma_{n-1},\sigma_n))\cdot (\gamma,\gamma):=\partial_{\beta,\sigma_0}\partial_{\sigma_n,\gamma}(\sigma_0,\sigma_1)\otimes\dots\otimes(\sigma_{n-1},\sigma_n),$$
and the factorization property means that a $\mu\in Hom(I_\Sigma^{\otimes n},I_\Sigma)$ is required to factorize as:
$$\mu:I_\Sigma^{\otimes n}\to I_\Sigma^{\otimes_{S_\Sigma} n}\to I_\Sigma.$$
\begin{proof}
The space $End_{I_\Sigma,S_\Sigma}(n)$ is spanned by the maps:
$$\mu_{(\sigma_0,\cdots,\sigma_n)}:((\beta_0,\beta_1)\otimes (\beta'_1,\beta_2)\otimes\cdots\otimes (\beta'_{n-1},\beta_n))\longmapsto \partial_{\beta_1,\beta'_1}\cdots\partial_{\beta_{n-1},\beta'_{n-1}}\partial_{(\beta_0,\cdots,\beta_n),(\sigma_0,\cdots,\sigma_n)}\cdot (\sigma_0,\sigma_n).$$

The first component $End_{I_\Sigma,S_\Sigma}(1)$ contains in particular $\sum\limits_{\sigma_0\leq \sigma_1\in\Sigma}\mu_{(\sigma_0,\sigma_1)}$, which is the identity of $I_\Sigma$. Moreover, given $\mu_{(\sigma_0,\cdots,\sigma_n)}$ and $\mu_{(\beta_0,\cdots,\beta_k)}$, we have
\begin{equation}\label{eqqun}\mu_{(\sigma_0,\cdots,\sigma_n)}\circ_i\mu_{(\beta_0,\cdots,\beta_k)}=\partial_{\sigma_{i-1},\beta_0}\partial_{\sigma_i,\beta_k} \mu_{(\sigma_0,\cdots,\sigma_{i-1}=\beta_0,\beta_1,\cdots,\beta_k=\sigma_i,\sigma_{i+1},\cdots,\sigma_n)},
\end{equation}
where $\circ_i$ stands for the composition product in $\mathcal L_{I_\Sigma}$,
which concludes the proof.
\end{proof}
\begin{Ex}[Cochain operads]
The simplicial cochain complex $C^\ast(\hat\Sigma)$ has an operad structure defined by \cite{8}:
$$\gamma: C^k(\hat\Sigma)\otimes C^{n_1}(\hat\Sigma)\otimes \cdots\otimes C^{n_k}(\hat\Sigma)\to C^{n_1+\cdots+n_k}(\hat\Sigma)$$
$$\gamma((\sigma_0,\cdots,\sigma_k)^\ast\otimes(\beta_0^1,\cdots,\beta_{n_1}^1)^\ast\otimes\cdots\otimes(\beta_1^k,\cdots,\beta_{n_k}^k)^\ast)(\gamma_0,\cdots,\gamma_{n_1+\cdots+n_k})$$
$$:=\partial_{(\sigma_0,\cdots,\sigma_k),(\gamma_0,\gamma_{n_1},\cdots,\gamma_{n_1+\cdots+n_k})}\partial_{(\gamma_0,\cdots,\gamma_{n_1}),(\beta_0^1,\cdots,\beta_{n_1}^1)}\cdots\partial_{(\gamma_{n_1+\cdots+n_{k-1}},\cdots,\gamma_{n_1+\cdots+n_k}),(\beta_1^k,\cdots,\beta_{n_k}^k)},$$
where $(\sigma_0,\cdots,\sigma_k)^\ast$ denotes the $k$-cochain whose value on $(\beta_0,\cdots,\beta_k)$ is $\partial_{(\sigma_0,\cdots,\sigma_k),(\beta_0,\cdots,\beta_k)}$. The unit of the operad is $\sum\limits_{\sigma_0\leq\sigma_1}(\sigma_0,\sigma_1)^\ast$.
\end{Ex}
\begin{Th}[Operadic comparison theorem] The map
$$(\sigma_0,\cdots,\sigma_k)^\ast\longmapsto \mu_{(\sigma_0,\cdots,\sigma_k)}$$
induces an isomorphism of operads
$$C^\ast(\hat\Sigma)\cong End_{I_\Sigma,S_\Sigma}(\ast).$$
\end{Th}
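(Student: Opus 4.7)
The plan is to verify three things: that the assignment $(\sigma_0,\dots,\sigma_k)^\ast \longmapsto \mu_{(\sigma_0,\dots,\sigma_k)}$ is a linear bijection in each arity, that it sends the unit of $C^\ast(\hat\Sigma)$ to the unit of $End_{I_\Sigma,S_\Sigma}(\ast)$, and that it intertwines the operadic composition structures. For the third point I would work with the partial compositions $\circ_i$, since the key formula \eqref{eqqun} has already been established on the endomorphism side; it then remains to prove the parallel partial-composition formula inside the cochain operad and to observe that the two coincide.

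Bijectivity is immediate: the chains $(\sigma_0,\dots,\sigma_k)$ with $\sigma_0\le\cdots\le\sigma_k$ form the canonical basis of $\hat\Sigma_k$, so the dual cochains $(\sigma_0,\dots,\sigma_k)^\ast$ form a basis of $C^k(\hat\Sigma)$, while the preceding theorem's proof already exhibits $\{\mu_{(\sigma_0,\dots,\sigma_k)}\}$ as a linearly independent spanning family of $End_{I_\Sigma,S_\Sigma}(k)$ (distinct index tuples act nontrivially on distinct basis elements of $I_\Sigma^{\otimes_{S_\Sigma}k}$). Unit preservation is equally direct: the cochain unit $\sum_{\sigma_0\le\sigma_1}(\sigma_0,\sigma_1)^\ast$ is sent to $\sum_{\sigma_0\le\sigma_1}\mu_{(\sigma_0,\sigma_1)}$, which acts on a generator $(\beta_0,\beta_1)$ of $I_\Sigma$ by returning $(\beta_0,\beta_1)$ itself, hence equals $\id_{I_\Sigma}$, the unit of $End_{I_\Sigma,S_\Sigma}(1)$.

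The core of the proof is compatibility with composition. By identity (1-ii) of the Remark in Section \ref{opera}, the full composition $\gamma$ of an operad is determined by its partial compositions $\circ_i$, so it suffices to check compatibility with $\circ_i$. Unwinding the definition of $\gamma$ given in the cochain example at a generic chain $(\gamma_0,\dots,\gamma_{n+k-1})$, inserting the operadic unit $\sum_{a\le b}(a,b)^\ast$ at every position other than $i$, the Kronecker deltas collapse the unit insertions onto the trivial one-step chains $\gamma_j\le\gamma_{j+1}$, and a careful bookkeeping yields
\begin{equation*}
(\sigma_0,\dots,\sigma_n)^\ast \circ_i (\beta_0,\dots,\beta_k)^\ast
= \partial_{\sigma_{i-1},\beta_0}\,\partial_{\sigma_i,\beta_k}\,
(\sigma_0,\dots,\sigma_{i-1},\beta_1,\dots,\beta_{k-1},\sigma_i,\dots,\sigma_n)^\ast.
\end{equation*}
This is formally identical to \eqref{eqqun} on the endomorphism side under the assignment, so the map intertwines $\circ_i$ and therefore $\gamma$.

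The main (and essentially only) obstacle is the careful bookkeeping of Kronecker symbols in the verification of the partial-composition identity inside $C^\ast(\hat\Sigma)$: one must evaluate both sides on a generic chain and check that the deltas produced by the cochain-operad structure collapse to exactly those appearing on the right-hand side. No conceptual difficulty arises, since both operad structures are built from the same combinatorics of chains glued along matching endpoints; the verification is therefore a matching of indices rather than a structural argument.
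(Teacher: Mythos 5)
Your proposal is correct and follows essentially the same route as the paper: the paper's proof consists precisely of computing $(\sigma_0,\cdots,\sigma_n)^\ast\circ_i(\beta_0,\cdots,\beta_k)^\ast$ by inserting the operadic unit $\sum_{\gamma_0\leq\gamma_1}(\gamma_0,\gamma_1)^\ast$ at every slot other than $i$, collapsing the Kronecker deltas, and matching the result with Eq.~(\ref{eqqun}). Your additional explicit checks of arity-wise bijectivity and unit preservation are left implicit in the paper but are correct and harmless.
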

\begin{proof}
We indeed have
$$(\sigma_0,\cdots,\sigma_n)^\ast\circ_i(\beta_0,\cdots,\beta_k)^\ast=$$
$$\gamma((\sigma_0,\cdots,\sigma_n)^\ast\otimes (\sum\limits_{\gamma_0\leq \gamma_1\in\Sigma}(\gamma_0,\gamma_1)^\ast \otimes\cdots\otimes\sum\limits_{\gamma_0\leq \gamma_1\in\Sigma}(\gamma_0,\gamma_1)\otimes (\beta_0,\cdots,\beta_{n_i})^\ast\otimes$$
$$\sum\limits_{\gamma_0\leq \gamma_1\in\Sigma}(\gamma_0,\gamma_1)^\ast\otimes\cdots\otimes \sum\limits_{\gamma_0\leq \gamma_1\in\Sigma}(\gamma_0,\gamma_1)^\ast)$$
$$=\partial_{\sigma_{i-1},\beta_0}\partial_{\sigma_{i},\beta_{k}}(\sigma_0,\cdots,\sigma_{i-1},\beta_1,\cdots,\beta_{n_i-1},\sigma_{i},\cdots,\sigma_k)^\ast ,$$
that agrees with Eq. (\ref{eqqun}).
\end{proof}
\begin{Co}
There is a canonical embedding of the cochain operad of a poset into the endomorphism operad of its incidence algebra:
$$C^\ast(\hat\Sigma)\hookrightarrow \{Hom(I_\Sigma^{\otimes \ast},I_\Sigma)\}.$$
\end{Co}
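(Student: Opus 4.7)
The plan is to obtain this embedding by composing two structures already established in the preceding theorems. First, by the Operadic comparison theorem, the map $(\sigma_0,\dots,\sigma_k)^\ast \longmapsto \mu_{(\sigma_0,\dots,\sigma_k)}$ gives an isomorphism of operads $C^\ast(\hat\Sigma) \cong End_{I_\Sigma,S_\Sigma}(\ast)$. Second, by the theorem defining $End_{I_\Sigma,S_\Sigma}(\ast)$, this family is by construction a suboperad of the endomorphism operad $\mathcal{L}_{I_\Sigma} = \{Hom(I_\Sigma^{\otimes\ast},I_\Sigma)\}$, and hence there is a canonical operadic inclusion $End_{I_\Sigma,S_\Sigma}(\ast) \hookrightarrow \mathcal{L}_{I_\Sigma}$.

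The composition
\[
C^\ast(\hat\Sigma) \xrightarrow{\ \cong\ } End_{I_\Sigma,S_\Sigma}(\ast) \hookrightarrow \{Hom(I_\Sigma^{\otimes\ast},I_\Sigma)\}
\]
is then a morphism of operads; it is injective because both factors are (an isomorphism followed by an inclusion of a suboperad). This produces the asserted canonical embedding, with explicit formula $(\sigma_0,\dots,\sigma_k)^\ast \longmapsto \mu_{(\sigma_0,\dots,\sigma_k)}$, the latter being regarded now as a $\mathbb{K}$-linear map $I_\Sigma^{\otimes k} \to I_\Sigma$ via forgetting the $S_\Sigma$-bimodule structure.

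There is essentially no obstacle: the substance lies in the two earlier theorems. The only point worth mentioning is the verification that the forgetful map $End_{I_\Sigma,S_\Sigma}(n) \to Hom(I_\Sigma^{\otimes n},I_\Sigma)$ is injective, which is immediate since it is literally the forgetful functor on the underlying vector space of a $\mathbb{K}$-linear map. Canonicity follows from the fact that every construction used --- the generating basis $\mu_{(\sigma_0,\dots,\sigma_k)}$, the partial compositions, and the inclusion of the suboperad $End_{I_\Sigma,S_\Sigma}(\ast)$ into $\mathcal{L}_{I_\Sigma}$ --- depends only on the data of the poset $\Sigma$ and its incidence algebra.
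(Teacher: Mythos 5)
Your proposal is correct and is exactly the argument the paper intends: the corollary is stated without proof as the immediate composite of the operadic isomorphism $C^\ast(\hat\Sigma)\cong End_{I_\Sigma,S_\Sigma}(\ast)$ with the suboperad inclusion $End_{I_\Sigma,S_\Sigma}(\ast)\hookrightarrow \mathcal{L}_{I_\Sigma}$, the latter being injective by definition since $End_{I_\Sigma,S_\Sigma}(n)$ is literally a subspace of $Hom(I_\Sigma^{\otimes n},I_\Sigma)$.
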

\section{Relative operads}\label{relop}
The previous operadic framework can be naturally refined to the relative setting that, we claim, is the right one to study poset combinatorics and its links to topology and homological algebra.
We let $B$ be an associative unital algebra.
\begin{De}
A $B$-relative operad (relative operad, for short) is a collection of $B$-bimodules
 $\{\mathcal{O}(k)\mid k\ge 1\}$ together with
    $B$-bimodule morphisms:
    $$\begin{array}{ccc}{\gamma^{\mathcal{O}}}:\mathcal{O}(k)&\to&Hom_{B-BiMod}(\mathcal{O}(n_1)\otimes_B\cdots\otimes_B\mathcal{O}(n_k),
    \mathcal{O}(n_1+\cdots+n_k))\\
        x & \longmapsto & (x_1\otimes_B\cdots\otimes_B x_k\longmapsto\gamma^{\mathcal{O}}(x;x_1,\cdots,x_k))
    \end{array}$$
    which are:
    \begin{enumerate}
\item associative in the sense that
        \begin{align*}
    \gamma^{\mathcal{O}}(\gamma^{\mathcal{O}}(x;x_1,\cdots,x_k);&y_1,\cdots,y_{n_1+\cdots+n_k})=\\
    &\gamma^{\mathcal{O}}(x;\gamma^{\mathcal{O}}(x_1;y_1,\cdots,y_{n_1}),\gamma^{\mathcal{O}}(x_2;y_{n_1+1},\cdots,y_{n_1+n_2}),\cdots\\
   & \cdots,\gamma^{\mathcal{O}}(x_k;y_{n_1+\cdots+n_{k-1}+1},\cdots,y_{n_1+\cdots+n_{k-1}+n_k})),
        \end{align*}
 \item there is an identity element $1_{\mathcal{O}}\in\mathcal{O}(1)$, also called simply the unit of the operad, such that
       $$\gamma^{\mathcal{O}}(x;\underbrace{1_{\mathcal{O}},\cdots,1_{\mathcal{O}}}_{\mbox{k times}})=x =\gamma^{\mathcal{O}}(1_{\mathcal{O}};x).$$
    \end{enumerate}
\end{De}
\begin{Ex}[Relative endomorphism operad] Let $A$ be an associative unital algebra and $B$ a subalgebra. The $B$-relative endomorphism operad of $A$ is the relative operad defined by:
$$\{Hom_{B-BiMod}(A^{\otimes_B n},A),\ n\geq 1\}$$
with structure operator $\gamma$ defined by the composition of multilinear $B$-bimodules morphisms.
\end{Ex}
\begin{Ex}[Operator-valued probability operad] A particularly interesting example of relative operad originates in noncommutative probability that aims at encoding a notion of conditional probability suited for operator-valued random variables, see e.g. \cite{Mingo}.

In that case, we consider
$$\{Hom_{B-BiMod}(A^{\otimes_B n},B)|\ n\geq 1\}.$$
It is obviously a suboperad of the relative endomorphism operad of $A$: the operadic composition of $B$-valued morphisms is a $B$-valued morphism.
\end{Ex}
\begin{Ex}
The most meaningful example for our purposes originates in the relative Hochschild complex.  We consider here:
$$\{Hom_{BM}(I_\Sigma^{\otimes_{S_\Sigma}n},I_\Sigma)|{n\geq 1}\}.$$
The previous computations in the article show that the structure map defining the operadic structure on $\{S_\Sigma-Hom(I_\Sigma^{\otimes n},I_\Sigma)\}_{n\geq 1}$ go over and also define an operadic structure on $$\{Hom_{BM}(I_\Sigma^{\otimes_{S_\Sigma}n},I_\Sigma)|{n\geq 1}\}.$$

Using the CCT isomorphism $\iota$, the same construction can also be performed at the cochain algebra level (we leave the exercise to the reader).
\end{Ex}

\section{Brace differential graded algebras}\label{bdga}
Let us introduce now BDGAs. These algebras first appeared in the
work of Getzler-Jones on algebras up to homotopy (without a
specific name) as a particular case of $B_\infty$-algebras, associated
in particular to Hochschild complexes of associative algebras, see
\cite[Sect. 5.2]{gj}. When Gerstenhaber and Voronov studied them more
in detail \cite{8,vg,v}, they decided to call these algebras homotopy
$G$-algebras. However, this
terminology appeared to be a misleading one after Tamarkin had shown
that the name $G$(erstenhaber)-algebra up to homotopy should be
naturally given to another class of algebras \cite{t}. We
call them by a name that reflects their properties and
should not create confusion, namely: brace differential graded
algebras.\par The basic idea is that BDGAs are
associative differential graded algebras together with extra (brace)
operations that behave exactly as the Kadeishvili-Getzler brace
operations on the Hochschild cohomological complex of an associative
algebra \cite{k,ge}. We write, as usual, $B(A)$ for the cobar coalgebra
over a differential graded algebra (DGA) $A$, where the product is
written $\cdot$  and the differential (of degree +1) $\d$. That is,
$B(A)$ is the cofree graded coalgebra
$T(A[1]):=\bigoplus\limits_{n\in\NM}A[1]^{\ot n}$ over the desuspension
$A[1]$ of $A$ ($A[1]_n:=A_{n+1}$). We use the bar notation and write
$[a_1|...|a_n]$ for $a_1\ot ...\ot a_n\in A[1]^{\ot n}$. In particular,
the coproduct on $T(A[1])$ is given by: $$\Delta
[a_1|...|a_n]:=\sum\limits_{i=0}^n[a_1|...|a_i]\ot [a_{i+1}|...|a_n].$$
There is a differential coalgebra structure on $B(A)$ induced by the
DGA structure on $A$. In fact, since $B(A)$ is cofree as a graded
coalgebra, the properties of the cofree coalgebra functor imply that,
in general, a coderivation $D\in Coder(B(A))$ is entirely determined by
the composition (written as a degree 0 morphism): $$\tilde D:\
B(A)\mapright{D}B(A)[1]\mapright{p}A[2],$$ where $p$ is the natural
projection. In particular, the differential $d$ on $B(A)$ is induced by
the maps: $$\d :A[1]\mapright{} A[2],$$ and $$\mu :A[1]\ot
A[1]\mapright{} A[2],$$ where $\mu (a,b):=(-1)^{|a|}a\cdot b$. The
algebra $A$ is a BDGA if it is provided with a set of
extra-operations called the braces: $$B_k: A[1]\ot A[1]^{\ot
k}\mapright{} A[1],\ k\geq 1,$$ satisfying certain relations. These
relations express exactly the fact that the braces have to induce a
differential Hopf algebra structure on $B(A)$. Explicitly, the
relations satisfied by the braces are then \cite[Sect. 5.2]{gj} and
\cite{kh,v} (we use Getzler's notation:
$v\{v_1,...,v_n\}:=B_n(v\ot (v_1\ot
...\ot v_n))$):
\begin{enumerate}
\item The brace relations
(the associativity relations for the product on $B(A)$).
$$(v\{v_1,...,v_m\})\{w_1,...,w_n\}=\sum\limits_{0\leq i_1\leq
j_1\leq ...\leq i_m\leq j_m\leq
n}(-1)^{\sum\limits_{k=1}^m(|v_k|-1)(\sum\limits_{l=1}^{i_k}(|w_l|-1))}$$
$$v\{w_1,...,w_{i_1},v_1\{w_{i_1+1},...,w_{j_1}\},w_{j_1+1},...,
v_m\{w_{i_m+1},...,w_{j_m}\},w_{j_m+1},...,w_n\},$$ with the usual
conventions on indices: for example, an expression such as
$v_5\{w_7,...,w_6\}$ has to be read $v_5\{\emptyset\}=v_5$. \item The
distributivity relations of the product w.r. to the braces. $$(v\cdot
w)\{ v_1,...,v_n\}=\sum\limits_{k=0}^n(-1)^{|
w|\sum\limits_{p=1}^k(|v_p| -1)}v\{v_1,...,v_k\}\cdot
w\{v_{k+1},...,v_n\},$$
\item The boundary relations.
$$\d (v\{v_1,...,v_n\})-\d v\{v_1,...,v_n\}$$
$$+\sum\limits_{i=1}^n(-1)^{|v|+|v_1|+...+|v_{i-1}|-i+1}v\{v
_1,...,\d v_i,...,v_n\}$$ $$=(-1)^{|v|(|v_1|-1)}v_1\cdot (
v\{v_2,...,v_n\})$$
$$-\sum\limits_{i=1}^{n-1}(-1)^{|v|+|v_1|+... + | v_{ i
}|-i-1}v\{v_1,...,v_i\cdot v_{i+1},...,v_n\}$$
$$+(-1)^{|v|+|v_1|+...+| v_{ n-1 }|-n}(v\{v_1,...,v_{n-1}\})\cdot
v_n.$$

\end{enumerate}

\begin{Ex}
There is a  BDGA structure on
the Hochschild cochain complex $C^{\ast}(A,A)$ of an associative
algebra $A$ over a commutative unital ring $k$ \cite{8}. Recall that
$C^n(A,A)=Hom_k(A^{\ot n},A)$. The brace operations on
$C^{\ast}(A,A)$ are the multilinear operators defined for
$x,x_1,...,x_n$ homogeneous elements in $C^{\ast}(A,A)$ and
$a_1,...,a_m$ elements of $A$ by:\\
$$\{x\}\{x_1,...,x_n\}(a_1,...,a_m):=\sum\limits_{0\leq i_1\leq
i_1+|x_1|\leq i_2\leq ...\leq i_n+|x_n|\leq
n}(-1)^{\sum\limits_{k=1}^ni_k\cdot (|x_k|-1)}$$
$$x(a_1,...,a_{i_1},x_1(a_{i_1+1},...,a_{i_1+|x_1|}),...,a_{i_n},x_n(a_
{ i _n+1},...,a_{i_n+|x_n|}),...a_m).$$
The other operations defining the BDGA structure, $\d$
and $\cdot$ are, respectively, the
Hochschild coboundary and the cup product.\end{Ex}

\begin{Pro}The canonical embedding of $S_\Sigma-Hom(I_\Sigma^{\otimes n},I_\Sigma)\cong Hom_{BM}(I_\Sigma^{\otimes_{S_\Sigma}n},I_\Sigma)$ into $Hom(I_\Sigma^{\otimes n},I_\Sigma)$ induces a  BDGA structure on
the relative Hochschild cochain complex $\{Hom_{BM}(I_\Sigma^{\otimes_{S_\Sigma}n},I_\Sigma)\}_{n\geq 1}$.\end{Pro}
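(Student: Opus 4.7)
The plan is to exploit the fact that the entire BDGA structure on the Hochschild cochain complex $C^{\ast}(A,A)$ of any associative unital algebra $A$ is, by the Getzler--Jones/Gerstenhaber--Voronov construction, determined by only two ingredients: (i) the operadic composition $\gamma$ of the endomorphism operad $\mathcal L_A$, from which the brace operations are built as sums of iterated partial compositions $\circ_i$; and (ii) the distinguished element $m\in\mathcal L_A(2)$ given by the multiplication of $A$, which provides the cup product $f\cup g = \gamma(m;f,g)$ and the Hochschild differential $\delta(f)=[m,f]$ as a Gerstenhaber bracket. Consequently, to transfer the BDGA structure to the relative cochain complex, it suffices to verify that the subspace $\{\Hom_{BM}(I_\Sigma^{\otimes_{S_\Sigma} n},I_\Sigma)\}_{n\geq 1}\hookrightarrow\{\Hom(I_\Sigma^{\otimes n},I_\Sigma)\}_{n\geq 1}$ is stable under $\gamma$ and contains $m$.

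First I would observe that stability under operadic composition is precisely the content of the previous section: the theorem establishing $End_{I_\Sigma,S_\Sigma}(\ast)$ as a suboperad of $\mathcal L_{I_\Sigma}$ shows that the $\circ_i$ of two relative cochains (viewed through the canonical retract identifying them with $S_\Sigma$-bimodule maps from $I_\Sigma^{\otimes n}$ that factorize through $I_\Sigma^{\otimes_{S_\Sigma} n}$) is again of that form. Since the explicit formula for the Gerstenhaber braces $x\{x_1,\dots,x_n\}$ displayed in the example preceding the proposition is literally a signed sum of iterated partial compositions $(\cdots((x\circ_{i_n}x_n)\circ_{i_{n-1}}x_{n-1})\cdots)\circ_{i_1}x_1$, we deduce immediately that braces of relative cochains are relative.

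Next I would check that the multiplication $m:I_\Sigma\otimes I_\Sigma\to I_\Sigma$ lives in $End_{I_\Sigma,S_\Sigma}(2)$. This is exactly Remark \ref{relincalg}: the product of the incidence algebra is $S_\Sigma$-bilinear and factorizes canonically through $I_\Sigma\otimes_{S_\Sigma}I_\Sigma$, since $(\sigma,\sigma')\cdot(\beta,\beta')$ vanishes unless $\sigma'=\beta$, which is precisely the relation identifying elements in $I_\Sigma\otimes_{S_\Sigma}I_\Sigma$. Combined with the previous step, this shows that both $f\cup g=\gamma(m;f,g)$ and $\delta(f)=m\circ_1 f -\sum_i(-1)^if\circ_i m +(-1)^{n+1}m\circ_2 f$ preserve the relative subspace, so all structure maps of the BDGA—braces, cup product, and differential—restrict.

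The only mildly delicate point, and the one I would pay attention to, is consistency: once the operations restrict, the axioms (brace relations, distributivity, boundary relations) are automatically satisfied in the subspace simply because they are inherited from the ambient BDGA. Thus there is no further identity to verify, and the main obstacle reduces to the bookkeeping already handled by the operadic comparison theorem together with the factorization property in Remark \ref{relincalg}.
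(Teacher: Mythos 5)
Your proposal is correct and follows essentially the same route as the paper, whose proof simply observes that the brace operations are iterated compositions of $S_\Sigma$-equivariant morphisms and hence preserve the relative subcomplex. You are in fact slightly more explicit than the paper's one-line argument, since you also verify that the multiplication $m$ of $I_\Sigma$ factorizes through $I_\Sigma\otimes_{S_\Sigma}I_\Sigma$, which is what guarantees that the cup product and the Hochschild differential (and not only the braces) restrict to the relative cochains.
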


The Proposition follows from the observation that $S_\Sigma$-equivariance properties are preserved by the brace operations, that are obtained by iterated compositions of $S_\Sigma$-equivariant morphisms.
\begin{Ex}
There is
a BDGA structure on the
cochain complex of a simplicial set \cite{gj,8}.
Recall that a simplicial set is a contravariant functor
from the category $\bf\Delta$ of finite sets $[n]=\{0,...,n\}$ and
increasing morphisms to $\bf Set$. For a simplicial set $S:{\bf
\Delta}\lra \bf Set$, for $\sigma\in S_n:=S([n])$, and for a strictly
increasing sequence $0\leq a_0< ...<a_m\leq n$, we write $\sigma
(a_0,...,a_m)$ for $S(i_a) (\sigma )\in S_m$, where $i_a$ is the
unique map from $[m]$ to $[n]$ sending $[m]$ to $\{a_0,...,a_m\}$.
Define a map $\Delta_{1,r}$ from the singular complex of $\Sigma$, $C_\ast
(\hat\Sigma)$ to $C_\ast (\hat\Sigma)\ot C_\ast (\hat\Sigma)^{\ot r}$ as follows. For $\sigma\in \hat\Sigma_n$, set:\\
$$\Delta_{1,r}(\sigma ):=\sum\limits_{0\leq b_1'\leq b_1\leq ...\leq
b_r'\leq b_r\leq n}(-1)^{\sum\limits_{k=1}^r((b_k-b_k')b_k')}$$
$$\sigma
(0,1,...,b_1',b_1,b_1+1,...,b_2',b_2,...,b_r',b_r,...,n-1,n)$$ $$\ot
(\sigma (b_1',...,b_1)\ot \sigma (b_2',...,b_2)\ot ...\ot \sigma
(b_r',...,b_r)).$$
Dualizing $\Delta_{1,r}$, we get a
map from $C^\ast (\hat\Sigma)\ot C^\ast (\hat\Sigma)^{\ot r}$ to $C^\ast
(\hat\Sigma)$. By analogy with the case of Hochschild cochains, we write
$\s\{\s_1,...,\s_r\}$ for $\Delta_{1,r}^\ast (\s\ot (\s_1\ot ...\ot
\s_r))$. These brace operations on cochains, together with the
simplicial coboundary and the cup product induce a BDGA
structure on the bar coalgebra on $C^\ast (\hat\Sigma)$.\end{Ex}

\begin{Pro}The isomorphism
$\iota$ commutes with the action of the brace operations on $C^\ast
(\hat\Sigma)$ and $C_{S_\Sigma}^\ast (I_\Sigma ,I_\Sigma
)$.\end{Pro}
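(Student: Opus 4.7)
The plan is to leverage the operadic comparison theorem already established. Both brace structures — the Gerstenhaber--Voronov braces on the Hochschild side and the braces on $C^{\ast}(\hat\Sigma)$ obtained by dualizing $\Delta_{1,r}$ — are signed sums of iterated partial operadic compositions, so once $\iota$ is known to be an isomorphism of operads sending the cup product to the cup product, verifying the brace identity reduces to a combinatorial matching of sum ranges and signs. The fact that braces preserve $S_\Sigma$-equivariance (as noted in the preceding Proposition) ensures the identity lands automatically in the relative subcomplex, so it suffices to argue on the ambient complex $\{Hom(I_\Sigma^{\otimes n}, I_\Sigma)\}$.

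I would then evaluate both brace operations on dual basis generators $\sigma = (\sigma_0,\ldots,\sigma_k)^{\ast}$ and $\sigma_j = (\beta^j_0,\ldots,\beta^j_{k_j})^{\ast}$. On the simplicial side, applying the dual of $\Delta_{1,r}$ to an arbitrary chain of total length $n = k + \sum_j (k_j - 1)$, the Kronecker factors force the chain to split as a concatenation of a prefix of $(\sigma_0,\ldots,\sigma_k)$ with the inserted chains $(\beta^j_0,\ldots,\beta^j_{k_j})$ sharing endpoints at the insertion positions $b_j'$, yielding a dual basis cochain with the sign determined by the $(b_k - b_k') b_k'$ exponent. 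On the Hochschild side, $\iota(\sigma)\{\iota(\sigma_1),\ldots,\iota(\sigma_r)\}$ expands, via Getzler's formula applied to the generators $\mu_{(\sigma_0,\ldots,\sigma_k)}$ and $\mu_{(\beta^j_0,\ldots,\beta^j_{k_j})}$, as a signed sum of iterated $\circ_i$-compositions; by equation (\ref{eqqun}) each composition is nonzero exactly when the endpoints agree, and then yields $\mu$ of the same concatenated chain found on the simplicial side.

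The main obstacle is the sign bookkeeping. The two formulas use slightly different indexings — insertion position $i_j$ and argument length $|x_j|$ on the Hochschild side, cut points $b_j'$ and simplex lengths $b_j - b_j'$ on the simplicial side — and one must verify that under the natural dictionary $i_j \leftrightarrow b_j'$, $|x_j| - 1 \leftrightarrow b_j - b_j'$ (which respects the desuspension shift of the brace grading) the Getzler exponent $\sum_j i_j(|x_j| - 1)$ matches $\sum_j (b_j - b_j') b_j'$. This check is a routine combinatorial verification. All genuine structural work is absorbed by the operadic comparison theorem: both brace structures are polynomial expressions in the operadic composition and the multiplication, and those are already known to be preserved by $\iota$.
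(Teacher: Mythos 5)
Your proposal is correct in substance but takes a different route from the paper. The paper's own proof is a bare-hands cochain-level computation: it evaluates $f\{f_1,\dots,f_k\}$ on a general chain $\sigma_0\leq\cdots\leq\sigma_m$ using the dual of $\Delta_{1,r}$, applies $\iota$, and rewrites the result term by term as $\iota(f)\{\iota(f_1),\dots,\iota(f_k)\}$ evaluated on the corresponding tensor $(\sigma_0,\sigma_1)\otimes_{BM}\cdots\otimes_{BM}(\sigma_{m-1},\sigma_m)$; it never invokes the operadic comparison theorem, and it explicitly omits the signs. You instead factor the statement through the operadic comparison theorem, using the fact that both brace structures are the canonical ones attached to a multiplicative (unital) operad --- signed sums of compositions $\gamma(x;1,\dots,1,x_1,1,\dots,x_n,\dots,1)$ with operad units interleaved --- so that any unit-preserving operad isomorphism automatically intertwines them. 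This is more structural and generalizes beyond the present situation, but it carries one extra obligation that the paper's direct computation avoids: you must verify that the simplicial braces defined by dualizing $\Delta_{1,r}$ really coincide with the braces induced by the cochain operad of the example in Section 5 (i.e.\ that inserting the unit $\sum_{\gamma_0\leq\gamma_1}(\gamma_0,\gamma_1)^\ast$ into the unused slots reproduces the $b_j'\leq b_j$ summation). Your second paragraph, evaluating on dual basis generators and matching against the composition formula for the $\mu_{(\sigma_0,\dots,\sigma_n)}$, is exactly the check needed for this, so the argument closes; on the sign question you are no less careful than the paper, which also declines to track them.
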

\begin{proof}
Indeed, let $f,f_1,...,f_k$ belong respectively to
$C^n(\hat\Sigma )$, $C^{n_1}(\hat\Sigma )$,..., $C^{n_k}(\hat\Sigma )$.
Let
$(\sigma_0\leq
\sigma_1\leq \sigma_2\leq ...\leq \sigma_{m-1}\leq \sigma_m)\in
\hat\Sigma_m$, where $m:=n+n_1+...+n_k-k $. Let
us also introduce the following useful convention. Let e.g.
$(\sigma_{i_0},\sigma_{i_1},k_1,k_2,\sigma_{i_3},...,\sigma_{i_q},k_p)$
be any sequence, the elements of which are either scalars (the $k_i$s), either
simplices of $\Sigma$ (the $\sigma_i$s), and assume that
$(\sigma_{i_0}\leq\sigma_{i_1}\leq ...\leq\sigma_{i_q})$ is a simplex
of $\hat\Sigma$. Then, we write
$f(\sigma_{i_0},\sigma_{i_1},k_1,k_2,\sigma_{i_3},...,\sigma_{i_q},k_p)
$ for $(\prod_{i=1}^pk_i)\cdot f(\sigma_{i_0}\leq\sigma_{i_1}\leq
...\leq\sigma_{i_q})$.\par We have, according to the definition
of the braces (we omit the signs for lisibility, following a standard practice in algebraic topology):\par $f\{f_1,...,f_k\}(\sigma_0\leq\sigma_1\leq
...\leq\sigma_m)$ $$=\sum\pm
f(\sigma_0,...,\sigma_{i_1},f_1(\sigma_{i_1}\leq
...\leq\s_{i_1+n_1}),\s_{i_1+n_1},...,\s_{i_k},$$
$$f_k(\sigma_{i_k}\leq ...\leq\s_{i_k+n_k}),\s_{i_k+n_k},...,\s_m).$$
Therefore:\par $\iota
(f\{f_1,...,f_k\})((\sigma_0,\sigma_1),(\sigma_1,\sigma_2),...,(\sigma_
{m-1},\s_m))$ $$=\{\sum\pm
f(\sigma_0,...,\sigma_{i_1},f_1(\sigma_{i_1}\leq
...\leq\s_{i_1+n_1}),\s_{i_1+n_1},...,\s_{i_k},$$
$$f_k(\sigma_{i_k}\leq
...\leq\s_{i_k+n_k}),\s_{i_k+n_k},...,\s_m)\}\cdot (\s_0,\s_m)$$
$$=\sum\pm \iota
(f)((\sigma_0,\sigma_1),...,(\sigma_{i_1-1},\sigma_{i_1}),f_1(\sigma_{i
_1}\leq ...\leq\s_{i_1+n_1})\cdot (\sigma_{i_1},\sigma_{i_1+n_1}),$$
$$(\sigma_{i_1+n_1},\sigma_{i_1+n_1+1}),...
,(\sigma_{i_k-1},\sigma_{i_k}),$$ $$f_k(\sigma_{i_k}\leq
...\leq\s_{i_k+n_k})\cdot
(\sigma_{i_k},\sigma_{i_k+n_k}),...,(\s_{m-1},\s_m))$$ $$=\sum\pm \iota
(f)\{\iota (f_1),...,\iota
(f_k)\}((\sigma_0,\sigma_1),(\sigma_1,\sigma_2),...,(\sigma_{m-1},\s_m)
),$$ and the proof of the proposition follows.\end{proof}

\begin{Th}The morphism $\iota$ is an isomorphism of
BDGAs between the singular
cochain complex of the barycentric subdivision of a finite simplicial
complex $\Sigma$ and the $S_\Sigma$-relative Hochschild cochain complex
of the incidence algebra of $\Sigma$.\par In particular, as the embedding of the latter complex into the classical Hochschild cochain complex
of the incidence algebra of $\Sigma$ is also a morphism of BDGAs, besides being a quasi-isomorphism,
the cohomology
comparison theorem of Gerstenhaber and Schack relating singular
cohomology and Hochschild cohomology can be realized, at the cochain
level, as a quasi-isomorphism of BDGAs.\end{Th}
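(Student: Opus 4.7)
The plan is to assemble the theorem from the three structural compatibilities already established in the paper. A BDGA structure consists of the differential $\delta$, the cup product $\cdot$, and the brace operations $v\{v_1,\ldots,v_n\}$ satisfying the relations (1)--(3) recalled in Section \ref{bdga}. Accordingly, the argument splits into checking that $\iota$ transports each of these three pieces of data, and then interpreting the result in terms of the embedding into the classical Hochschild complex.

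First I would observe that $\iota$ preserves the differential and the cup product: this is exactly the content of the classical CCT stated earlier (or, more finely, of the cosimplicial comparison theorem of Section \ref{cosimpl}, since the Hochschild differential and the cup product are obtained by alternating sums of coface maps and from the Alexander--Whitney coproduct, both of which $\iota$ commutes with). Next, $\iota$ is compatible with the braces: this is the content of the Proposition immediately preceding the theorem. Combining these three compatibilities and the fact that $\iota$ is already known to be a bijection on cochains, $\iota$ is an isomorphism of BDGAs between $C^\ast(\hat\Sigma)$ and $C^\ast_{S_\Sigma}(I_\Sigma,I_\Sigma)$.

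For the second half of the statement, I would use the Proposition asserting that the embedding
\[
C^\ast_{S_\Sigma}(I_\Sigma,I_\Sigma)\cong S_\Sigma\text{-}\mathrm{Hom}(I_\Sigma^{\otimes n},I_\Sigma)\hookrightarrow \mathrm{Hom}(I_\Sigma^{\otimes n},I_\Sigma)=C^\ast(I_\Sigma,I_\Sigma)
\]
is compatible with the BDGA structures: the braces on $C^\ast(I_\Sigma,I_\Sigma)$ are built from iterated insertions of multilinear maps, so if the inputs are $S_\Sigma$-equivariant and factor through $\otimes_{S_\Sigma}$, so does the output; the same holds for $\cdot$ and $\delta$ (which are themselves special cases of compositions built from multiplication in $I_\Sigma$, an $S_\Sigma$-bimodule map). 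Hence the embedding is a morphism of BDGAs. By Theorem \ref{comphh} of Gerstenhaber--Schack, this embedding is a quasi-isomorphism.

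Finally, composing $\iota$ with this embedding yields a quasi-isomorphism of BDGAs from $C^\ast(\hat\Sigma)$ to $C^\ast(I_\Sigma,I_\Sigma)$ that realises the classical CCT at the chain level with all three layers of structure intact. The main conceptual obstacle, already handled in the preceding Proposition, is the brace compatibility: one must check that the combinatorial insertion formula on simplicial cochains, when transported through $\iota$, matches exactly the Kadeishvili--Getzler brace formula on Hochschild cochains; after that, differential and cup product compatibility, as well as preservation under the subalgebra embedding, reduce to the formal observation that every operation involved is built from $S_\Sigma$-equivariant composition and therefore preserves the relative subcomplex.
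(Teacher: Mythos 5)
Your proposal is correct and follows essentially the same route as the paper: the theorem is assembled from the cochain/cup-product compatibility of $\iota$ (the CCT and its cosimplicial refinement), the brace compatibility established in the immediately preceding Proposition, and the fact that the embedding of the relative complex into the full Hochschild complex preserves the BDGA operations because they are built from $S_\Sigma$-equivariant compositions, with the Gerstenhaber--Schack theorem supplying the quasi-isomorphism property. No gap to report.
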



\begin{thebibliography}{99}

    \bibitem{2} Batkam Mbatchou V. Jacky III, Calvin Tcheka, Simplicial Structure on Connected Multiplicative Operads (in preparation).
    \bibitem{dotsenko} Bremner M. R., Dotsenko V.,
Algebraic Operads: An Algorithmic Companion, Chapman and Hall/CRC, 2016.
    \bibitem{3} Dourlens, S. On the Hochschild cohomology of triangular algebras.
Communications in Algebra, Volume 31, 2003 - Issue 10, 4871--4897.
   \bibitem{FK}  K. Ebrahimi-Fard, L. Foissy, J. Kock and F. Patras. Operads of (noncrossing) partitions, interacting bialgebras, and moment-cumulant relations. Adv. Math. 369 (2020).
   \bibitem{F4}  F. Fauvet, L. Foissy and D. Manchon. The Hopf algebra of finite topologies and mould composition, en collaboration avec  Ann. Inst. Fourier (Grenoble) 67 no. 3, 911-945 (2017).
    \bibitem{F3} F. Fauvet, L. Foissy and D. Manchon. Operads of finite posets. Electron. J. Combin. 25(1) (2018).
    \bibitem{Foissy} L. Foissy, Algebraic structures associated to operads, arXiv:1702.05344.
    \bibitem{F5} L. Foissy, C. Malvenuto and F. Patras, Infinitesimal and $B_\infty$-algebras, finite spaces, and quasi-symmetric functions. J. Pure Appl. Algebra 220 no. 6, 2434-2458 (2016).
   \bibitem{F1} L. Foissy, C. Malvenuto and F. Patras. A theory of pictures for quasi-posets. J. Algebra 477, 496-515 (2017).
    \bibitem{6} Gerstenhaber, M.; Schack, S. D. Simplicial cohomology is Hochschild cohomology. J. Pure Appl. Algebra 30, 143-156 (1983).
    \bibitem{7} Gerstenhaber, M.; Schack, S. D. Algebraic cohomology and deformation theory. Deformation theory of algebras and structures and applications, Nato Adv. Study Inst., Castelvecchio-Pascoli/Italy 1986, Nato ASI Ser., Ser. C,11-264 (1988)
    \bibitem{8} Gerstenhaber, M.; Voronov, A. A. Homotopy $G$-algebras and moduli space operad. Int. Math. Res. Not. 1995, No.3, 141-153 (1995).
\bibitem{ge}Getzler, E. Cartan homotopy formulas and the Gauss-Manin
connection in cyclic homology. Isr. Math. Conf. Proc. 7, 65-78 (1993).
    \bibitem{gj} Getzler, E.; Jones J.D.S. Operads, homotopy algebra and
iterated integrals. hep-th/9403055
    \bibitem{k} Kadeishvili, T.V. Structure of the
A($\infty$)-algebra and the Hochschild and Harrison cohomologies. Tr.
Tbilis. Mat. Inst. Razmadze 91, 19-27 (1988)
    \bibitem{kh} Khalkhali, M.
Operations on cyclic homology, the $X$ complex, and a conjecture of Deligne.
Commun. Math. Phys. 202, No.2, 309-323 (1999)
    \bibitem{lv} Loday, J.-L.; Vallette, B., Algebraic Operads, Springer, 2012.
    \bibitem{May} May, Simplicial objects in algebraic topology, University of Chicago Press, 1967.
    \bibitem{Mingo}  Mingo,  J. A.; Speicher, R., Free Probability and Random Matrices, Springer, 2017.
    \bibitem{fp} F. Patras, Brace algebras and the cohomology comparison theorem, arxiv:math/0112177v2.
    \bibitem{t} Tamarkin, D.E. Another
proof of M. Kontsevich formality theorem. math.QA/9803025
    \bibitem{v} Voronov,
A. A. Homotopy Gerstenhaber algebras. Dito, Giuseppe (ed.) et al.,
Conf\'erence Mosh\'e Flato 1999: Quantization, deformations, and
symmetries, Dijon, France, September 5-8, 1999. Volume II. Dordrecht:
Kluwer Academic Publishers. Math. Phys. Stud. 22, 307-331 (2000).
    \bibitem{vg}Voronov, A.A.; Gerstenhaber, M. Higher operations on the
Hochschild complex. Funct. Anal. Appl. 29, No.1, 1-5 (1995)
\end{thebibliography}
\end{document}